\documentclass[11pt,a4paper]{article}
\usepackage{graphicx}
\usepackage{amsfonts}
\usepackage{amssymb}
\usepackage{amsthm}
\usepackage[centertags]{amsmath}
\usepackage{newlfont}

\usepackage{color}
\usepackage{graphicx,color}
\usepackage{amsmath, amssymb, graphics}

%\usepackage[dvips]{color}
%\definecolor{red}{rgb}{1,0,0}

\def\r{\mathbb R}
\def\l{\mathbb L}
 \def\e{\mathbb E}
 
\DeclareMathOperator{\arccosh}{arcosh}
\DeclareMathOperator{\arcsinh}{arcsinh}

%%%%%%%%%%%%%%%%%%%%%%%%%%%%%%%%%%%%%
\setlength{\textwidth}{15cm}
\setlength{\oddsidemargin}{1cm}
\setlength{\evensidemargin}{1cm}
\setlength{\textheight}{19.5cm}
\setlength{\parskip}{.5\baselineskip}
\setlength{\parskip}{4mm}
\setlength{\headsep}{1.5cm}
\setlength{\parindent}{0pt}

\newtheorem{theorem}{Theorem}[section]

\newtheorem{proposition}[theorem]{Proposition}
\newtheorem{definition}[theorem]{Definition}
\newtheorem{lemma}[theorem]{Lemma}

%%%%%%%%%%%%%%%%%%%%%%%%%%%%%%%%%%%%%%%%%%%%%%%%%%%%%%%%%%

%%%%%%%%%%%%%%%%%%%%%%%%%%%%%%%%%%%%%%%%%%%%%%%%%%%%%%%%%%
\title{Classification of zero mean curvature surfaces of separable type in Lorentz-Minkowski space}

\author{Seher Kaya\\
Department of Mathematics\\
 Ankara University\\
Ankara, Turkey\\
\texttt{seherkaya@ankara.edu.tr}
\and
Rafael L\'opez\footnote{Partially supported by the grant no. MTM2017-89677-P, MINECO/AEI/FEDER, UE}\\
 Departamento de Geometr\'{\i}a y Topolog\'{\i}a\\ Instituto de Matem\'aticas (IEMath-GR)\\
 Universidad de Granada\\
 18071 Granada, Spain\\
\texttt{rcamino@ugr.es}}

\date{}
%%%%%%%%%%%%%%%%%%%%
\begin{document}
\maketitle

\begin{abstract} Consider  the Lorentz-Minkowski $3$-space $\l^3$ with the metric $dx^2+dy^2-dz^2$ in canonical coordinates $(x,y,z)$. A   surface in $\l^3$ is said to be separable if satisfies an equation of the form $f(x)+g(y)+h(z)=0$ for  some smooth functions $f$, $g$ and $h$ defined in open intervals of the real line. In this article we classify  all zero mean curvature surfaces of separable type, providing a method of construction of examples. 
\end{abstract}

\noindent {\it Keywords:} Lorentz-Minkowski space, zero mean curvature,  separable surface \\
{\it AMS Subject Classification:} 53A10,  53C42
 %%%%%%%%%%%%%%%%%%%%%%%%%%%%
\section{Introduction}
%%%%%%%%%%%%%%%%%%%%%%%%

A zero mean curvature (ZMC, for short) surface in the Lorentz-Minkowski $3$-space $\l^3$ is a non-degenerate surface whose mean curvature is zero   at every point of the surface. There are two types of non-degenerate surfaces, namely,  spacelike and timelike surfaces if the induced metric is Riemannian and Lorentzian respectively. A spacelike   ZMC surface is   called a maximal surface   because it locally maximizes the area functional. A timelike ZMC surface is also called a   timelike minimal surface.  In general, we will allow ZMC surfaces of mixed type, that is,  it may exist regions on the surface of spacelike type and  timelike type. On the other hand, the zero mean curvature equation in non-parametric form   is a PDE which is of elliptic type (resp. hyperbolic type) in case the surface is spacelike (resp. timelike).

  One of the problems in the theory of ZMC surfaces  is the construction of     examples. Among the different techniques commonly  proposed in the literature,  we point out the use of complex analysis by means of the Weierstrass representation \cite{ko,we} and the Bj\"{o}rling formula \cite{ac,cd} or the use of integrable systems in the method of  loop groups \cite{dt,it}. It is also of special interest    to provide examples of   surfaces with particular   geometric properties.  A clear example of this idea is the family of ZMC surfaces that are invariant by a uniparametric group of rotations of $\l^3$.  These surfaces, the so-called rotational ZMC surfaces,   are classified: see for example \cite{ko,lo1,va1}. More recently, there is a great activity in the search of explicit examples of ZMC surfaces of   mixed type  where the transition between the spacelike and timelike regions occurs along degenerate curves: here we refer \cite{as,fu0,fu1,fu2,fu3,fu4,kim,st} without to be a complete list.   

In this paper we present a new method for constructing ZMC surfaces   in Lorentz-Minkowski space $\l^3$ by the technique of separable variables.  The Lorentz-Minkowski $\l^3$ is the vector space $\r^3$ with canonical coordinates $(x,y,z)$ and endowed  with metric $\langle,\rangle$ of signature $(+,+,-)$. Any surface of $\l^3$ is locally the zero level set $F(x,y,z)=0$ of a smooth function $F$ defined in an open set $O\subset\r^3$, being $0$ a regular value of $F$. Our strategy in the construction method consists in to assume  that the function     $F$ is     a separable function of the three variables $x$, $y$ and $z$. This transforms the   zero mean curvature equation    into an ODE which may   be more manageable.

\begin{definition} A   surface $S$ in $\l^3$ is said to be separable if  can be expressed as  
\begin{equation}\label{s1}
S=\{(x,y,z)\in\l^3:f(x)+g(y)+h(z)=0\}.
\end{equation}
 Here $f$, $g$ and $h$ are   smooth functions defined in some intervals $I_1$, $I_2$ and $I_3$ of $\r$, respectively. Also, $f'(x)^2+g'(y)^2+h'(z)^2\not=0$ for every $x\in I_1$, $y\in I_2$ and $z\in I_3$. 
 \end{definition}

In this paper, we pose the following

\begin{quote}{\bf Problem.} {\it Classify all    separable surfaces in $\l^3$ with   zero mean curvature.}\end{quote}

A first example of separable surface occurs if  one of the functions in (\ref{s1}) is   linear  in its variable. Without loss of generality, we suppose that $h$ is the linear function $h(z)=az+b$, with $a,b\in \r$. If $a=0$, then the implicit equation of the surface is $f(x)+g(y)+b=0$, which says that the surface is a cylindrical surface whose base curve is a planar curve contained in the $xy$-plane. If now we assume that the surface has zero mean curvature, then this planar curve is a straight-line and the surface  is  a plane. If $a\not=0$, then the implicit equation of the  surface is  $z= -f(x)/a-g(y)/a-b/a$. In the literature, a surface   expressed as  $z=\phi(x)+\psi(y)$ is called a translation surface. The  family of  ZMC surfaces  of translation type was classified in   \cite{ko,li,va1}. These surfaces  are the (spacelike and timelike) analogous  ones in the Lorentzian setting of the classical minimal surfaces in the Euclidean space discovered by Scherk in 1835 (\cite{sc}; see also \cite{ni}), together a flat B-scroll over a null curve which appears as an exceptional case when the surface is a timelike minimal surface \cite{va1}. In the rest of the paper we will assume that the surface is neither cylindrical nor translation type,  or equivalently, none of the functions $f$, $g$ and $h$ are linear functions. 
 
 Other examples of separable surfaces are the rotational surfaces when the rotation axis is one of the coordinate axes. Indeed, if the rotational axis is the $z$-line, the implicit equation of the surface   is  $G(z)=x^2+y^2$.  In case that the rotational axis is the $x$-line (resp. the $y$-line), then the rotational surface is $G(x)=y^2-z^2$ (resp. $G(y)=x^2-z^2$).  Rotational surfaces with zero mean curvature will appear as a particular case in our method of construction of separable ZMC surfaces: see the beginning of Section \ref{sec3}.
 
In the Euclidean space,  the minimal surfaces of separable type were initially studied by    Weingarten \cite{we} in 1887 with the purpose  to generalize the  translation surfaces $z=\phi(x)+\psi(y)$ obtained by Scherk \cite{sc}. Later, in 1956-7 Fr\'echet  gave a deep study of these surfaces obtaining explicit examples \cite{fr1,fr2}.  Our paper is inspired of the calculations of Fr\'echet (see also \cite[Sect. 5.2]{ni}). It deserves to point that Sergienko and Tkachev  \cite{st} gave an approach to the study of  separable maximal surfaces in $\l^3$ because  they  were interested in the construction of   doubly periodic maximal surfaces which satisfy an implicit equation of type $\zeta(z)=\phi(x)\psi(y)$, which is equivalent to (\ref{s1}).  

The goal of our paper is to investigate  the   problem in all its   generality, assuming mixed type causal character and   providing an exhaustive method to construct all separable ZMC surfaces. This allows to present a plethora of new examples of ZMC surfaces, including also other known surfaces    in the literature.

In Section \ref{sec2}, we compute the zero mean curvature equation  of a separable surface obtaining for each one of the functions $f$, $g$ and $h$ a differential equation of fourth order. After successive integrations, we finally    obtain  three ODEs of first order on $f$, $g$ and $h$ respectively depending on  a real constant $K$ and a set of nine constants $a_i$, $b_i$, $c_i$, $1\leq i\leq 3$. These constants are not arbitrary because they are linked by  six nonlinear equations. A first step in our strategy is to search the constants $a_i$, $b_i$ and $c_i$ satisfying the above six equations and once obtained these constants, to solve the ODEs.  In general, this system can not be solved by quadratures and the solutions will be expressed in terms of elliptic functions. We separate this discussion in the sections \ref{sec3}, \ref{sec4} and \ref{sec5} depending on the sign of the   constant $K$. We will obtain  explicit examples of separable ZMC surfaces and we will study the causal character of the surface and, if possible, its extension to regions of lightlike points  in each case.

%%%%%%%%%%%%%%%%%%%%%%%%%%%%%%%%%%%%%%%%
\section{The method of construction  of separable ZMC surfaces}\label{sec2}
%%%%%%%%%%%%%%%%%%%%%%%%%%%%%%%%%%%%%%%%%%

The Lorentz-Minkowski space $\l^3$ is   the vector space $\r^3$ with canonical coordinates  $(x,y,z)$ and endowed with the Lorentzian metric $\langle,\rangle=dx^2+dy^2-dz^2$. A vector $\vec{v}\in\r^3$ is said to be spacelike, timelike or lightlike if the inner product $\langle \vec{v},\vec{v}\rangle$ is positive, negative or zero, respectively. The norm of $\vec{v}$   is $|\vec{v}|=\sqrt{\langle \vec{v},\vec{v}\rangle}$ if $\vec{v}$ is spacelike and $|\vec{v}|=\sqrt{-\langle \vec{v},\vec{v}\rangle}$ if $\vec{v}$ is timelike.  More generally,  a surface (or a curve) $S$  of $\l^3$  is called  spacelike, timelike or lightlike if the induced metric on $S$ is Riemannian, Lorentzian or degenerated, respectively. This property   is called the causal character of $S$.    We refer  the reader to \cite{lo2,ws} for some basics of $\l^3$. In order to have no confusion,    we denote by $\e^3$ for the Euclidean $3$-space, that is, $\r^3$  endowed with the Euclidean metric $dx^2+dy^2+dz^2$.

Let $S$ be a surface in $\l^3$  whose induced metric $\langle,\rangle$ is  non-degenerated.    Recall that it is equivalent to say that $S$ is spacelike (resp. timelike) if there is a unit normal timelike (resp. spacelike)  vector field defined on $S$.  Here we allow that the surface is of mixed type, so it may exist regions on the surface of spacelike and timelike type and, eventually, we will study if the surface can be extended to regions of $\l^3$ of lightlike type. In both types of surfaces, the mean curvature $H$   is defined as the trace of the second fundamental form. If $H=0$ everywhere, we say that $S$ has zero mean curvature and we abbreviate by saying a ZMC surface. 

We know that any surface $S$   is locally given by an implicit equation $F(x,y,z)=0$, where $0$ is a regular value of $F$.   The mean curvature $H$ is  calculated by means of  the gradient   and the Hessian matrix of $F$ where the computations are similar as in   $\e^3$.   The Lorentzian gradient   of $F$ is  

$$\nabla^{L}F=\left( F_{x}, F_{y},- F_{z}\right)$$
 where, as it is usual, the subscripts indicate the   partial derivatives with respect to the  corresponding variable.  The surface is spacelike (resp. timelike) if $\langle \nabla^{L}F,\nabla^{L}F\rangle<0$ (resp. $\langle \nabla^{L}F,\nabla^{L}F\rangle>0$) and consequently  $N=\nabla^LF/|\nabla^LF|$ defines  a unit normal vector field on $S$. The mean curvature $H$ is  the Lorentzian divergence $\mbox{div}^L$ of $N$,   
$$ \mbox{div}^L\left(\frac{\nabla^{L}F}{|\nabla^{L}F|}\right)=\bigg(\frac{F_{x}}{|\nabla^{L}F|}\bigg)_{x}+\left(\frac{F_{y}}{|\nabla^{L}F|}\right)_{y}-\left(\frac{F_{z}}{|\nabla^{L}F|}\right)_{z}=H.$$
Therefore the ZMC equation $H=0$ is equivalent to
$$ \frac{\Delta^{L}F}{|\nabla^{L}F|}+\frac{\epsilon}{|\nabla^{L}F|^{3}} (\nabla^LF)^t\cdot \mbox{Hess} F\cdot \nabla^LF=0,$$
where    $\epsilon=1$ (resp. $\epsilon=-1$)  if $S$ is  spacelike   (resp. timelike),  $\Delta^LF=F_{xx}+F_{yy}-F_{zz}$ and
$$\mbox{Hess}F=
\left(
\begin{array}{ccc}
 F_{xx}&F_{xy}&F_{xz}  \\
 F_{yx}&F_{yy}&F_{yz}\\
 F_{zx}&F_{zy}&F_{zz}
\end{array}
\right).$$

\begin{proposition}
If   $S= F^{-1}(\{0\})$ is a non-degenerate surface in  $\l^3$, then  $H=0$    if and only if
\begin{equation}\label{elm1}
-\langle \nabla^{L}F,\nabla^LF\rangle\Delta^{L}F+ (\nabla^LF)^t\cdot \mbox{Hess} F\cdot \nabla^LF=0.
\end{equation}
\end{proposition}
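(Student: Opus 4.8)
The plan is to start from the expression for the mean curvature obtained immediately before the statement: expanding the Lorentzian divergence of the unit normal $N=\nabla^LF/|\nabla^LF|$ gives
$$H=\frac{\Delta^{L}F}{|\nabla^{L}F|}+\frac{\epsilon}{|\nabla^{L}F|^{3}}\,(\nabla^LF)^t\cdot \mbox{Hess} F\cdot \nabla^LF,$$
with $\epsilon=1$ on the spacelike part of $S$ and $\epsilon=-1$ on the timelike part. Since $S$ is non-degenerate we have $\langle\nabla^LF,\nabla^LF\rangle\neq 0$ at every point, hence $|\nabla^LF|>0$ there, and the equation $H=0$ is equivalent to the one obtained after multiplying by the positive quantity $|\nabla^LF|^{3}$, namely
$$|\nabla^LF|^{2}\,\Delta^{L}F+\epsilon\,(\nabla^LF)^t\cdot \mbox{Hess} F\cdot \nabla^LF=0.$$

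It then remains to rewrite this $\epsilon$-dependent identity without reference to the causal character. For that I would record the elementary bookkeeping coming from the definition of $|\cdot|$: if $S$ is spacelike then $\epsilon=1$, $\langle\nabla^LF,\nabla^LF\rangle<0$ and $|\nabla^LF|^{2}=-\langle\nabla^LF,\nabla^LF\rangle$; if $S$ is timelike then $\epsilon=-1$, $\langle\nabla^LF,\nabla^LF\rangle>0$ and $|\nabla^LF|^{2}=\langle\nabla^LF,\nabla^LF\rangle$. In either case $|\nabla^LF|^{2}=-\epsilon\,\langle\nabla^LF,\nabla^LF\rangle$. Substituting this into the previous display and dividing by $\epsilon=\pm 1$ yields exactly (\ref{elm1}). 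Running the chain of equivalences in reverse (multiply (\ref{elm1}) by $\epsilon/|\nabla^LF|^{3}$ and use $|\nabla^LF|^{2}=-\epsilon\,\langle\nabla^LF,\nabla^LF\rangle$ once more) recovers $H=0$, so the two conditions are equivalent at every point of $S$.

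There is no genuine obstacle here: the argument is purely algebraic, and the only thing requiring care is the sign bookkeeping relating $\epsilon$, the norm $|\nabla^LF|$ and the sign of $\langle\nabla^LF,\nabla^LF\rangle$. What is worth emphasizing is conceptual rather than technical: equation (\ref{elm1}) is polynomial in the partial derivatives of $F$ and no longer contains $\epsilon$, so it holds simultaneously on the spacelike and timelike regions of a surface of mixed type, and it continues to make sense on lightlike loci as a limiting condition. This $\epsilon$-free form is precisely what makes the separable ansatz $F(x,y,z)=f(x)+g(y)+h(z)$ tractable in the subsequent computations of this section, since one can impose it without splitting into cases according to the causal character.
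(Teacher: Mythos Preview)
Your proposal is correct and follows essentially the same approach as the paper: the proposition is stated in the paper without an explicit proof, relying instead on the preceding displayed formula for $H$ in terms of $\Delta^L F$, $|\nabla^L F|$, $\epsilon$ and the Hessian quadratic form. Your contribution is simply to make explicit the sign bookkeeping $|\nabla^L F|^{2}=-\epsilon\,\langle\nabla^L F,\nabla^L F\rangle$ that the paper leaves to the reader, and this is carried out correctly.
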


Now suppose  that the function $F=F(x,y,z)$ is of separable variables   $F(x,y,z)=f(x)+g(y)+h(z)$.   The causal character of $S$ is determined by the sign of $\langle\nabla^L F,\nabla^L F\rangle=f'^2+g'^2-h'^2$, where the symbol $'$ indicates the  derivative with respect to the corresponding variable. Thus if $S$ is   spacelike (resp. timelike)   then   $f'^2+g'^2-h'^2<0$ (resp. $f'^2+g'^2-h'^2>0$). The  equation $H=0$ in (\ref{elm1}) is now
\begin{equation}\label{eq1}
f''(g'^2-h'^2)+g''(f'^2-h'^2)-h''(f'^2+g'^2)=0.
\end{equation}

Let us introduce the   notation  
$$u=f(x), \quad v=g(y),\quad w=h(z)$$ 
and 
$$X(u)=f'^2,\quad Y(v)=g'^2,\quad Z(w)=h'^2.$$
In the new variables, the implicit equation of the surface (\ref{s1})  is now $u+v+w=0$. In terms of the   functions $X$, $Y$ and $Z$, the causal character of the surface is determined by the sign of $X(u)+Y(v)-Z(w)$, with $u+v+w=0$, which is spacelike (resp. timelike, lightlike) if the sign is negative (resp. positive, zero). 

The ZMC equation  (\ref{eq1}) can be   expressed as
\begin{equation}\label{eq2}
A:=(Y-Z)X'+(X-Z)Y'-(X+Y)Z'=0,
\end{equation}
for all values $u$, $v$ and $w$ with the condition $u+v+w=0$. 

Since we are assuming that the separable surface is not       cylindrical or  translation type, then none of the functions $f$, $g$ and $h$ are linear, in particular, none of the three functions $X'$, $Y'$ or $Z'$ can vanish identically in some open interval of the corresponding domain. 

We need the following auxiliary result.

\begin{lemma} \label{le}
Let $Q=Q(u,v,w)$ be a smooth function defined in a domain $\Omega\subset\r^3$. If $Q(u,v,w)=0$ for any triple of the section $\Omega\cap\Pi$, where $\Pi$ is the plane of equation $u+v+w=0$, then on the section we have
$$Q_u=Q_v=Q_w.$$
\end{lemma}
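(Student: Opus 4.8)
The plan is to reduce the statement to the vanishing of the gradient of a composite function of two variables. Since $\Omega$ is open and $\Pi$ is the plane of equation $u+v+w=0$, the section $\Omega\cap\Pi$ is a relatively open subset of $\Pi$. I would parametrize $\Pi$ by the global chart $\Phi(u,v)=(u,v,-u-v)$, so that $U:=\Phi^{-1}(\Omega\cap\Pi)$ is an open subset of $\r^2$, and then introduce the composite function $\widetilde Q:=Q\circ\Phi\colon U\to\r$, namely $\widetilde Q(u,v)=Q(u,v,-u-v)$.

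By hypothesis $Q$ vanishes at every point of $\Omega\cap\Pi$, hence $\widetilde Q\equiv 0$ on the open set $U$, so both of its first-order partial derivatives vanish identically on $U$. Computing them by the chain rule gives
$$\partial_u\widetilde Q=Q_u-Q_w,\qquad \partial_v\widetilde Q=Q_v-Q_w,$$
where the partials of $Q$ on the right are evaluated at $(u,v,-u-v)$. Setting both expressions equal to zero yields $Q_u=Q_w$ and $Q_v=Q_w$ at every point of the section, that is, $Q_u=Q_v=Q_w$ there, which is exactly the assertion.

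The argument is essentially formal, and I do not expect any genuine obstacle. The only point that deserves a word of care is that one must work over the \emph{open} set $\Omega\cap\Pi$ (equivalently $U$) before differentiating, so that the vanishing of $\widetilde Q$ on a full two-dimensional neighbourhood — not merely along a single curve — legitimately forces both partial derivatives to vanish. Geometrically, the vectors $(1,0,-1)$ and $(0,1,-1)$ span the tangent plane to $\Pi$, and the displayed identities simply record that the directional derivatives of $Q$ along these two directions are zero on the section; this is the natural coordinate form of the lemma.
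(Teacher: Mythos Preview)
Your proof is correct and follows essentially the same approach as the paper: parametrize the plane by $(u,v)\mapsto(u,v,-u-v)$, observe that the composite function vanishes identically, and apply the chain rule to obtain $Q_u-Q_w=0$ and $Q_v-Q_w=0$. Your version is slightly more explicit about the openness of the parameter domain needed to justify the differentiation, but the argument is the same.
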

\begin{proof}
If   we write $w=-u-v$, then $Q(u,v,-u-v)=0$. Differentiating with respect to $u$, we deduce $Q_u-Q_w=0$.   Changing the roles of $u$, $v$ and $w$, we conclude the result. 
\end{proof}

Using this lemma,   the identities  $A_u-A_v=0$, $A_v-A_w=0$ and $A_u-A_w=0$, are respectively  
\begin{equation}\label{eq3}
\begin{split}
B_1:=&(Y-Z)X''-(X-Z)Y''-(X'-Y')Z'=0,\\
B_2:=&(Y'+Z')X'+(X-Z)Y''+Z''(X+Y)=0,\\
B_3:=&(Y-Z)X''+(X'+Z')Y'+(X+Y)Z''=0.
\end{split}
\end{equation}
From \eqref{eq2} and  the first two equations in \eqref{eq3}, we have a system of linear equations on   $Y-Z$, $X-Z$ and $X+Y$. The determinant of the coefficients of this system  is $-M$, where 
$$M=X''Y''Z'+X'Y''Z''+X''Y'Z''.$$
We find directly that  
$$M (Y-Z)X'=-X'Y'Z'(Y''(X'+Z')+Z''(Y'-X')),$$
$$M (X-Z)Y'=-X'Y'Z'(X''(Y'+Z')-Z''(Y'-X')),$$
$$M(X+Y)Z'=-X'Y'Z'(X''(Y'+Z')+Y''(X'+Z')). $$

Applying Lemma \ref{le} again to the functions $B_1$, $B_2$ and $B_3$ in (\ref{eq3}), we have $(B_1)_v-(B_1)_w=0$, $(B_2)_u-(B_2)_w=0$ and $(B_3)_u-(B_3)_v=0$. These three equations write, respectively, 
$$X''(Y'+Z')-Y'''(X-Z)-Z''(Y'-X')=0,$$
$$X''(Y'+Z')+Y''(X'+Z')-Z'''(X+Y)=0,$$
$$X'''(Y-Z)-Y''(X'+Z')-Z''(Y'-X')=0,$$
and so
$$M(Y-Z)X'=-X'Y'Z'(Y-Z)X''',$$
$$M(X-Z)Y'=-X'Y'Z'(X-Z)Y''',$$
$$M (X+Y)Z'=-X'Y'Z'(X+Y)Z'''.$$

Since   $X'Y'Z'\not=0$,  we deduce 
\begin{equation}\label{kk}
\frac{X'''}{X'}=\frac{Y'''}{Y'}=\frac{Z'''}{Z'}=K,
\end{equation}
where $K\in\r$ is a real constant. We solve these ODEs  according to the sign of $K$.

\begin{enumerate}
\item Case $K>0$. Let $K=k^2$, $k>0$. The solutions of (\ref{kk}) are
\begin{equation}\label{sol1}
\begin{split}
X(u)&=a_1+b_1 e^{k u}+c_1e^{-ku},\\
Y(v)&=a_2+b_2e^{kv}+c_2 e^{-kv},\\
Z(w)&=a_3+b_3e^{kw}+c_3 e^{-k w},
\end{split}
\end{equation}
where $a_i,b_i,c_i\in\r$, $1\leq i\leq 3$. These nine constants are not arbitrary because there is a relation between them thanks to (\ref{eq2}). We introduce the    functions $X$, $Y$ and $Z$ in (\ref{eq2}) and we replace $w$ by  $-u-v$ because $u+v+w=0$. Then (\ref{eq2}) is an equation of type
$$P_1e^{-ku}+P_2 e^{-kv}+P_3e^{ku}+P_3 e^{kv}+P_5e^{-ku-kv}+P_6e^{ku+kv}=0.$$
Since the exponential functions are  linearly independent,   the coefficients $P_i$ must be $0$. A computation of  $P_i$, $1\leq i\leq 6$, and setting to be $0$, yields
\begin{equation}\label{var1}
\left\{\begin{split}
(a_2-a_3)c_1+2b_2b_3&=0,\\
(a_1-a_3)c_2+2b_1b_3&=0,\\
(a_2-a_3)b_1+2c_2c_3&=0,\\
(a_1-a_3)b_2+2c_1c_3&=0,\\
(a_1+a_2)b_3+2c_1c_2&=0,\\
(a_1+a_2)c_3+2b_1b_2&=0.
\end{split}\right.
\end{equation}

\item Case $K<0$. Let $K=-k^2$, $k>0$. Then the solutions of (\ref{kk}) are 
\begin{equation}\label{sol2}
\begin{split}
X(u)&=a_1+b_1\cos(ku)+c_1\sin(ku),\\
Y(v)&=a_2+b_2\cos(kv)+c_2\sin(kv),\\
Z(w)&=a_3+b_3\cos(kw)+c_3\sin(kw).
\end{split}
\end{equation}
As in the previous case, and using that the trigonometric functions $\cos(ku)$, $\sin(ku)$, $ \cos(ku)$ and $\cos(kv)$ are  linearly independent, the relations between the constants $a_i$, $b_i$ and $c_i$  are now:
\begin{equation}\label{var2}
\left\{\begin{split}
(a_2-a_3)c_1-b_3c_2-b_2c_3&=0,\\
(a_1-a_3)c_2-b_3c_1-b_1c_3&=0,\\
(a_2-a_3)b_1+b_2b_3-c_2c_3&=0,\\
(a_1-a_3)b_2+b_1b_3-c_1c_3&=0,\\
(a_1+a_2)b_3+b_1b_2-c_1c_2&=0,\\
(a_1+a_2)c_3-b_2c_1-b_1c_2&=0.
\end{split}\right.
\end{equation}
\item Case $K=0$. Now the solutions of (\ref{kk}) are 
\begin{equation}\label{sol3}
\begin{split}
X(u)&=a_1+b_1u+c_1u^2,\\
Y(v)&=a_2+b_2v+c_2v^2,\\
Z(w)&=a_3+b_3w+c_3w^2.
\end{split}
\end{equation}
As in the case $K\not=0$, we insert the above solutions in (\ref{eq2}) and replace $w$ by $-u-v$, obtaining an equation of type
$$P_1+P_2u+P_3v+P_4uv+P_5u^2+P_6v^2+P_7u^2v+P_8uv^2=0.$$
The functions on $u$ and $v$ are  linearly independent, so all coefficients $P_i$ must vanish, $1\leq i\leq 8$. A computation of $P_i$ yields $P_7=P_8$ and $P_4$ is a linear combination of $P_5$ and $P_6$. Once computed the coefficients $P_i$ and setting to be $0$,   we deduce
\begin{equation}\label{var3}
\left\{\begin{split}
&a_1(b_2-b_3) +a_2(b_1-b_3)-a_3(b_1+b_2)=0,\\
&b_1b_2+b_2b_3+2c_1(a_2-a_3)+2c_3(a_1+a_2)=0,\\
&b_1b_2+b_1b_3+2c_2(a_1-a_3)+2c_3(a_1+a_2)=0,\\
&c_1(b_2+b_3)+c_3(b_1-b_2)=0,\\
&c_2(b_1+b_3)-c_3(b_1-b_2)=0,\\
&c_1c_2-c_1c_3-c_2c_3=0.
\end{split}\right.
\end{equation}
\end{enumerate}

Definitively, the functions $f$, $g$ and $h$ in (\ref{s1}) are the solutions of the ODEs of first order (\ref{sol1}), (\ref{sol2}) and (\ref{sol3}). We summarize in the following result the method of constructions of all separable ZMC surfaces of $\l^3$.

\begin{theorem} \label{t1}
Let $S$ be a separable ZMC  surface given by (\ref{s1}). Then the derivatives $f'$, $g'$ and $h'$   given in terms of  the functions $X$, $Y$ and $Z$  are the following:
\begin{enumerate}
\item Case $K\not=0$. Then the functions $X$, $Y$ and $Z$ are  given by expressions (\ref{sol1}) or  (\ref{sol2})  and the   constants $a_i$, $b_i$ and $c_i$ satisfy the relations (\ref{var1}) or (\ref{var2}) respectively.
\item Case $K=0$. Then the functions $X$, $Y$ and $Z$ are given by the expressions   (\ref{sol3}) and the constants $a_i$, $b_i$ and $c_i$ satisfy the relations (\ref{var3}).
\end{enumerate}
\end{theorem}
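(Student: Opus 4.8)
The plan is to reverse-engineer Theorem~\ref{t1} from the chain of reductions already carried out in Section~\ref{sec2}, and then verify that the reduction is lossless in both directions. The forward direction is essentially done: starting from the ZMC equation~(\ref{eq2}), two applications of Lemma~\ref{le} (first to $A$, producing $B_1,B_2,B_3$ in~(\ref{eq3}), then to the $B_i$) together with the nonvanishing of $X'Y'Z'$ and of $M$ force the three third-order ratios to a common constant $K$, as displayed in~(\ref{kk}). Integrating these linear ODEs according to $\mathrm{sign}(K)$ gives the three families~(\ref{sol1}), (\ref{sol2}), (\ref{sol3}) for $X,Y,Z$, hence for $f'^2,g'^2,h'^2$. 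So the first thing I would do is simply state that any separable ZMC surface satisfies~(\ref{kk}) and thus $X,Y,Z$ have one of the three stated forms.

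Next I would address the constraints~(\ref{var1}), (\ref{var2}), (\ref{var3}). The point is that (\ref{kk}) was derived only from the \emph{differentiated} consequences of~(\ref{eq2}), so the original undifferentiated equation $A=0$ must still be imposed; substituting $w=-u-v$ into~(\ref{eq2}) and plugging in the explicit $X,Y,Z$ turns $A=0$ into a finite linear combination of the monomials $e^{\pm ku},e^{\pm kv},e^{\pm k(u+v)}$ (resp.\ $\cos/\sin$ products, resp.\ monomials $u^iv^j$), whose coefficients $P_i$ must vanish by linear independence of those functions. Computing the $P_i$ and setting them to zero yields exactly the six relations in each case. This is the only genuinely computational step, and it is the part I would expect to be the main obstacle: one must be careful with the bookkeeping, in particular with the coincidences noted in the $K=0$ case ($P_7=P_8$ and $P_4$ a combination of $P_5,P_6$), which is why six rather than eight equations survive; an analogous collapse happens for $K\neq 0$ since $A=0$ on the plane $u+v+w=0$ depends on only two free variables.

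Finally I would check the converse, which is what makes the theorem a genuine classification rather than a list of necessary conditions: if $X,Y,Z$ are given by one of~(\ref{sol1})--(\ref{sol3}) with constants satisfying the corresponding system~(\ref{var1})--(\ref{var3}), then~(\ref{eq2}) holds identically on $u+v+w=0$, and therefore the surface $f(x)+g(y)+h(z)=0$ (with $f'^2=X\circ f$, etc.) has zero mean curvature wherever it is non-degenerate. This is immediate from the previous paragraph read backwards: vanishing of all $P_i$ is precisely equivalent to $A\equiv 0$ on the plane. I would also remark that the passage from $X,Y,Z$ back to $f,g,h$ is by the first-order ODEs $f'=\pm\sqrt{X(f)}$ etc., which always admit local solutions on intervals where the right-hand side is positive, so nothing is lost in translating the conclusion about $X,Y,Z$ into the statement about $f',g',h'$. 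Assembling the forward reduction, the coefficient computation, and this converse gives Theorem~\ref{t1}.
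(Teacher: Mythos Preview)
Your proposal is correct and follows essentially the same route as the paper: the forward direction is exactly the chain of reductions in Section~\ref{sec2} (two passes of Lemma~\ref{le}, then the substitution of the general solutions of~(\ref{kk}) back into~(\ref{eq2}) and identification of coefficients). One small inaccuracy: you cite ``nonvanishing of $M$'' as a hypothesis, but the paper's derivation does not assume this---it equates two expressions for $M(Y-Z)X'$ (and the analogous quantities), cancels the generically nonzero factor $(Y-Z)$, and obtains $M=-Y'Z'X'''=-X'Z'Y'''=-X'Y'Z'''$, which yields~(\ref{kk}) after dividing by $-X'Y'Z'$ regardless of whether $M$ vanishes; you should adjust your summary accordingly. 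Your explicit converse paragraph (vanishing of all $P_i$ $\Leftrightarrow$ $A\equiv 0$ on the plane, hence sufficiency of~(\ref{var1})--(\ref{var3})) and the remark on recovering $f,g,h$ from $X,Y,Z$ via $f'=\pm\sqrt{X\circ f}$ are not spelled out in the paper but are welcome clarifications.
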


   In order to integrate  (\ref{sol1}), (\ref{sol2}) and (\ref{sol3}), we remark that the functions in the right-hand sides of these equations must be positive because the functions $X$, $Y$ and $Z$ are positive. In addition, we need to consider a  choice of the sign  for the square roots of the first derivative, namely, 
$$\int^u\frac{du}{\sqrt{X(u)}}=\pm x,\quad   \int^v\frac{dv}{\sqrt{Y(v)}}=\pm y,\quad\int^w\frac{dw}{\sqrt{Z(w)}}=\pm w,$$
where the $(+)$ or $(-)$ sign is chosen depending on whether the derivatives $f'(x)$, $g'(y)$ and $h'(z)$ are positive or negative. For simplicity,    we will usually choose the positive  sign.

We now prove that  varying the value of $k$ in (\ref{sol1}) or  (\ref{sol2}), the solution surface is the same up to a homothety. This will allow to fix the value of $k$ in (\ref{sol1}) or  (\ref{sol2}).

 \begin{proposition} \label{pr1}
 A change of $k$ in the solutions of  (\ref{sol1}) or  (\ref{sol2})  produces a homothetical ZMC surface which is also of separable type.
 \end{proposition}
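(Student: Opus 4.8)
The plan is to show that a rescaling of the parameter $k$ in the solutions (\ref{sol1}) or (\ref{sol2}) corresponds, at the level of the implicit equation $f(x)+g(y)+h(z)=0$, to a dilation of $\r^3$, hence produces a surface that is isometric up to scaling and is again separable of the same type. First I would fix a separable ZMC surface $S$ coming from (say) the case $K=k^2>0$, with functions $X,Y,Z$ as in (\ref{sol1}) and constants $a_i,b_i,c_i$ satisfying (\ref{var1}). Given $\lambda>0$, consider the new constant $\tilde k=\lambda k$ and the candidate functions $\tilde X(u)=\lambda^2\bigl(a_1+b_1 e^{\tilde k u/\lambda}+c_1 e^{-\tilde k u/\lambda}\bigr)$, and similarly $\tilde Y,\tilde Z$; the point is to check that the rescaled nine constants $\tilde a_i=\lambda^2 a_i$, $\tilde b_i=\lambda^2 b_i$, $\tilde c_i=\lambda^2 c_i$ (with the argument rescaled as $u\mapsto u/\lambda$ absorbed into new variables) again satisfy the system (\ref{var1}), because each equation there is homogeneous of degree $2$ in the constants.

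Concretely, I would change variables in the implicit equation: if $u=f(x)$ solves $\int^u du/\sqrt{X(u)}=x$, then setting $\bar u=\lambda^2 u$, $\bar x=\lambda x$ one gets $\int^{\bar u} d\bar u/\sqrt{\lambda^2 X(\bar u/\lambda^2)}=\bar x$, and $\lambda^2 X(\bar u/\lambda^2)$ is exactly the right-hand side of (\ref{sol1}) written with exponent $k/\lambda$ acting on $\bar u$. Doing the same for $g,h$, the surface built from the parameter $k/\lambda$ is $\{\bar f(\bar x)+\bar g(\bar y)+\bar h(\bar z)=0\}$ where $(\bar x,\bar y,\bar z)=\lambda(x,y,z)$ and $\bar f=\lambda^2 f\circ(\cdot/\lambda)$, etc. Thus the new surface is the image of $S$ under the homothety $p\mapsto\lambda p$ of $\l^3$. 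A homothety of $\l^3$ preserves the causal character and scales the mean curvature by a constant factor, so the image is again a ZMC surface, and it is manifestly of separable type with the same sign of $K$. The case $K=-k^2<0$ with (\ref{sol2}) and (\ref{var2}) is handled verbatim, replacing exponentials by $\cos,\sin$; the system (\ref{var2}) is again quadratic-homogeneous in the constants, so the scaled constants solve it.

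The only genuine content is the bookkeeping of the homogeneity: one must verify that \emph{each} of the six relations in (\ref{var1}) (resp. (\ref{var2})) is invariant under $(a_i,b_i,c_i)\mapsto(\lambda^2 a_i,\lambda^2 b_i,\lambda^2 c_i)$, which is immediate since every term is a product of exactly two of these constants; and that the rescaling of the independent variable is consistent across all three ODEs, which forces the single uniform factor $\lambda$ on $(x,y,z)$. I expect this to be routine rather than a real obstacle; the main thing to be careful about is matching the square-root sign conventions in the integration $\int^u du/\sqrt{X(u)}=\pm x$, but since a homothety with $\lambda>0$ does not change any sign, the same choice of signs transports directly. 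Hence varying $k$ yields homothetic surfaces, and one may normalize $k=1$ in (\ref{sol1}) and (\ref{sol2}) without loss of generality.
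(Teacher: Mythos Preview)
Your approach is correct in substance and reaches the same conclusion, but the paper's execution is cleaner in two respects. First, the paper scales by $\tilde f(x)=\lambda f(x/\lambda)$ (so $\tilde u=\lambda u$), which gives $\tilde X(\tilde u)=\tilde f'(\tilde x)^2=f'(x)^2=X(u)$: the constants $a_i,b_i,c_i$ are \emph{unchanged}, hence (\ref{var1})/(\ref{var2}) hold automatically and no homogeneity check is needed. Your choice $\bar f=\lambda^2 f(\cdot/\lambda)$ forces $\bar u=\lambda^2 u$ and multiplies all constants by $\lambda^2$, which is why you then have to invoke the quadratic homogeneity of the relations; this works but is extra bookkeeping. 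Second, the paper never touches the explicit exponential (or trigonometric) form of $X,Y,Z$: it computes $\tilde K$ directly from the defining ODE (\ref{kk}) via $\tilde X'(\tilde u)=X'(u)/\lambda$ and $\tilde X'''(\tilde u)=X'''(u)/\lambda^3$, yielding $\tilde K=K/\lambda^2$ in one line for both sign cases simultaneously.

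One small slip to fix: with your scaling $\bar u=\lambda^2 u$, the exponent in $\lambda^2 X(\bar u/\lambda^2)$ is $(k/\lambda^2)\bar u$, not $(k/\lambda)\bar u$ as you wrote. It does not affect the conclusion, since any positive $k$ is still reached by a suitable $\lambda$, but you should keep the powers consistent between the first and second paragraphs of your argument.
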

 
 \begin{proof} Let $S$ be a separable ZMC surface given by $f(x)+g(y)+h(z)=0$ and suppose that it is a solution of (\ref{sol1}) or (\ref{sol2}) with $\lambda\not=0$. Define the functions 
$\tilde{f}(x)=\lambda f(x/\lambda)$, $\tilde{g}(y)=\lambda g(y/\lambda)$ and $\tilde{h}(z)=\lambda h(z/\lambda)$, defined in the intervals $\lambda I_1$, $\lambda I_2$ and $\lambda I_3$ respectively.  Then the separable  surface
$\tilde{S}=\{(\tilde{x},\tilde{y},\tilde{z}): \tilde{f}(\tilde{x})+\tilde{g}(\tilde{y})+\tilde{h}(\tilde{z})=0\}$   is the dilation of the surface $S$ by $\lambda$, in particular, $\tilde{S}$ is also a ZMC surface. 

With the utilized notation   in this section, we have  $\tilde{u}=\lambda u$, $\tilde{v}=\lambda v$ and $\tilde{w}=\lambda w$. Then $\tilde{X}(\tilde{u})=\tilde{f}'(\tilde{x})^2=f'(x/\lambda)^2=X(u)$, and similarly, $\tilde{Y}(\tilde{v})=Y(v)$ and $\tilde{Z}(\tilde{w})=Z(w)$. It is immediate
$$\tilde{X}'(\tilde{u})=\frac{d}{d\tilde{u}}\tilde{X}(\tilde{u})=\frac{1}{\lambda}X'(u),$$ 
and similarly  $\tilde{X}'''(\tilde{u})=X'''(u)/\lambda^3$. From (\ref{kk}) we deduce
$$\tilde{K}=\frac{\tilde{X}'''(\tilde{u})}{\tilde{X}'(\tilde{u})}=\frac{1}{\lambda^2}\frac{X'''(u)}{X'(u)}=\frac{1}{\lambda^2}K,$$
obtaining the result. 
\end{proof}

In the next sections   we will show  explicit examples of ZMC surfaces.  In some particular cases, the ODEs  can be solved by quadratures obtaining expressions in terms of   trigonometric or hyperbolic functions. In general,  we will see that  the class of ZMC surfaces of separable type can be expressed in terms of elliptic functions which are the inverses of certain elliptic integrals. 

%%%%%%%%%%%%%%%%
\section{Separable ZMC  surfaces: case $K=0$.}\label{sec3}
%%%%%%%%%%%%%%%%%%%%%%%%%%%%%%

In this section we will obtain particular examples of separable ZMC surfaces in  case $K=0$ of  Theorem \ref{t1}.   

Firstly, we will prove that the rotational ZMC surfaces belong to the case $K=0$, exactly  when   some of the constants $c_i$ in (\ref{var3}) are $0$.  Indeed, and without   loss of generality, we suppose $c_1=0$.  From the last equation of (\ref{var3}), we have $c_2c_3=0$. Because the arguments are symmetric on $c_2$ and $c_3$, we will assume in what follows that $c_2=0$. The fifth equation of (\ref{var3}) is   $c_3(b_1-b_2)=0$. Now there are two possibilities. 
\begin{enumerate}
\item Case    $c_3=0$. Then the second and third equations of (\ref{var3}) are $b_2(b_1+b_3)=0$ and $b_1(b_2+b_3)=0$, respectively. If $b_1=0$ (resp. $b_2=0$), then $X'=0$ (resp. $Y'=0$), which it is not possible. Hence $b_1=b_2=-b_3$ with $b_i\not=0$.  Now  the first equation of  (\ref{var3})  yields $a_3=a_1+a_2$. Then $X(u)=a_1+b_1u$, $Y(v)=a_2+b_1v$ and $Z(w)=a_3-b_1w$, in particular, $X+Y-Z=a_1+a_2-a_3=0$ proving that the surface is degenerate everywhere, which  is not possible.  

\item Case  $c_3\not=0$. Then $b_2=b_1\not=0$.  If we set $b_1=b$,  then $X(u)=a_1+bu$ and $Y(v)=a_2+bv$ and the integration leads to 
$$f(x)=\frac{b}{4}x^2-\frac{a_1}{b},\quad g(y)=\frac{b}{4}y^2-\frac{a_2}{b}.$$
Thus the implicit equation of the surface is 
$$x^2+y^2+\frac{4}{b}h(z)-4(a_1+a_2)=0,$$
proving  that the surface is a surface of revolution with respect to the $z$-axis. 
\end{enumerate}
As it was discussed in the Introduction, the rotational ZMC surfaces are classified, so we will discard this case, or equivalently, we will assume that $c_i\not=0$ for all $1\leq i\leq 3$.

%%%%
\subsection{Example where all $a_i$ and $b_i$ are $0$}\label{s-31}

Let choose the constants as $a_i=b_i=0$, $1\leq i\leq 3$, in    (\ref{var3}). Then 
$$X(u)=c_1u^2,\quad Y(v)=c_2 v^2,\quad Z(w)=c_3 w^2,$$
where  (\ref{var3})  reduces into the single equation $c_1c_2-c_1c_3-c_2c_3=0$. Note that $c_i>0$, $1\leq i\leq 3$, because $X$, $Y$ and $Z$ are positive functions. The solutions are
$$f(x)=\pm e^{\sqrt{c_1}x},\quad  g(y)=\pm e^{\sqrt{c_2}y},\quad h(z)=\pm e^{\sqrt{c_3}z},$$
and the surface is 
$$\{(x,y,z)\in\l^3: \pm e^{\sqrt{c_1}x}   \pm e^{\sqrt{c_2}y} \pm e^{\sqrt{c_3}z}=0\}.$$
See Figure \ref{fig2}, left.  For the causal character of the surface, we study the sign of $X+Y-Z$. By using $w=-u-v$, 
$$X(u)+Y(v)-Z(w)=c_1u^2+c_2 v^2-c_3w^2=\frac{(c_1 u-c_2 v)^2}{c_1+c_2}.$$
Since $c_i>0$,   the surface is   timelike and it may be extended to regions of  lightlike points if   $c_1u-c_2v=0$.  If we choose opposite signs in the definition of the  functions $f$ and $g$, then   $c_1u-c_2v\not=0$ and the surface is always timelike. Now suppose   that   the functions have the same  sign, that is,  $f(x)=e^{\sqrt{c_1}x}$ and $g(y)=e^{\sqrt{c_2}y}$. Then the     surface is 
$$\{(x,y,z)\in\l^3: e^{\sqrt{c_1}x}   + e^{\sqrt{c_2}y} - e^{\sqrt{c_3}z}=0\}$$ 
and
$$c_1u-c_2v=c_1e^{\sqrt{c_1}x}-c_2e^{\sqrt{c_2}y}=e^{\sqrt{c_1}x+\log(c_1)}-e^{\sqrt{c_2}y+\log(c_2)}.$$
Thus $c_1u-c_2v=0$ is equivalent to $\sqrt{c_1}x+\log(c_1)=\sqrt{c_2}y+\log(c_2)$. Using  $w=-u-v$, we deduce  $\sqrt{c_3}z=\sqrt{c_1}x+\log(c_1+c_2)-\log(c_2)$, obtaining a straight-line. We conclude that   the surface can  be  extended to the set
$$\{(x,y,z): \sqrt{c_1}x+\log(c_1)=\sqrt{c_2}y+\log(c_2), \sqrt{c_3}z=\sqrt{c_1}x+\log(c_1+c_2)-\log(c_2)\}$$
which is a straight-line formed by lightlike points.

%%%
\subsection{Example where all $b_i$ are $0$ and all $a_i$ are not $0$: case 1}\label{s-32}

In this subsection, we will show two examples where the difference will be  that the constants $a_i$, $b_i$ and $c_i$ have  opposite signs.  
 For the first example, the  constants $a_i$, $b_i$ and $c_i$  are
$$b_1=b_2=b_3=0, \quad c_1=-\alpha^2\beta^2, c_2=-\alpha^2, c_3=-\beta^2$$
$$a_1=\alpha^2\beta^2,\quad a_2=\alpha^2\beta^4,\quad a_3=\alpha^4\beta^2,$$
 with $\alpha^2-\beta^2=1$ and $\beta\not=0$. Then it is immediate that they satisfy the equations (\ref{var3}) and the ODEs (\ref{sol3}) are

$$X(u)=\alpha^2\beta^2-\alpha^2\beta^2 u^2,\quad Y(v)=\alpha^2\beta^4-\alpha^2v^2,\quad Z(w)=\alpha^4\beta^2-\beta^2 w^2.$$
After integrating, we obtain
\begin{eqnarray*}
x&=&\int\frac{1}{\alpha\beta}\frac{du}{\sqrt{1-u^2}}=\frac{1}{\alpha\beta} \arcsin(u)\\
y&=&\int\frac{1}{\alpha}\frac{dv}{\sqrt{\beta^4- v^2}}=\frac{1}{\alpha} \arcsin(\frac{v}{\beta^2})\\
z&=&\int\frac{1}{\beta}\frac{dw}{\sqrt{\alpha^4-w^2}}=\frac{1}{\beta} \arcsin(\frac{w}{\alpha^2})
\end{eqnarray*}
Therefore
$$f(x)=\sin(\alpha\beta x),\quad g(y)=\beta^2\sin(\alpha y),\quad h(z)=\alpha^2\sin(\beta z)$$
and the surface is
$$\{(x,y,z)\in\l^3: \sin(\alpha\beta x)+\beta^2\sin(\alpha y)+\alpha^2\sin(\beta z)=0\}.$$
This surface is a triply periodic ZMC surface because the   functions $f$, $g$ and $h$ are periodic. See Figure \ref{fig1}, left.

We study the causal character of the surface. Letting $w=-u-v$, we have 
\begin{eqnarray*}
X(u)+Y(v)-Z(w)&=&-\alpha^2\beta^2u^2-\alpha^2v^2+\beta^2w^2=-\beta^2(\beta^2u^2+\frac{ v^2}{\beta^2}-2uv)\\
&=&-\beta^4(\sin(\alpha\beta x)-\sin(\alpha y))^2\leq 0.
\end{eqnarray*}
This implies that  the surface is spacelike except satisfy the equation $\sin(\alpha\beta x)-\sin(\alpha y)=0$, which are lightlike points.   The  region of  these  points is included in the set of  straight-lines of equations 
$$\{y=\ \beta x+2\pi\mathbb{Z}, z=-\alpha x+2\pi\mathbb{Z}\}\cup\{y=\pi-\beta x+2\pi\mathbb{Z},z=\alpha x+\pi+2\pi\mathbb{Z}\}.$$

The second example appears when we  reverse of sign all constant $a_i$ and $c_i$, that is,   
$$b_1=b_2=b_3=0, \quad c_1=\alpha^2\beta^2, c_2=\alpha^2, c_3=\beta^2$$
$$a_1=-\alpha^2\beta^2,\quad a_2=-\alpha^2\beta^4,\quad a_3=-\alpha^4\beta^2.$$
 Then   they satisfy (\ref{var3}) again and the  integrations lead to
$$f(x)=\pm\cosh(\alpha\beta x),\quad g(y)=\pm\beta^2\cosh(\alpha y),\quad h(z)=\pm\alpha^2\cosh(\beta z)$$
and the surface is
 $$\{(x,y,z)\in\l^3:\pm \cosh(\alpha\beta x)\pm \beta^2\cosh(\alpha y)\pm \alpha^2\cosh(\beta z)=0\}.$$
 There must be  two   opposite signs in the implicit equation of the surface because on the contrary the above set would be empty.  As in the above example, now   the surface is a timelike minimal surface. In order to study when the surface can be extended by lightlike points, without loss of generality, we suppose   that   the implicit equation of the surface is $\cosh(\alpha\beta x)+ \beta^2\cosh(\alpha y)- \alpha^2\cosh(\beta z)=0$. Then  
  $$0=\beta^2u^2+\frac{ v^2}{\beta^2}-2uv=\beta^2\left(\cosh(\alpha\beta x)-\cosh(\alpha y)\right)^2.$$
  Hence $y=\pm\beta x$. By using   $u+v+w=0$, we deduce $\cosh(\alpha x)=\cosh(z)$, so $z=\pm\alpha x$. Thus the surface can be extended to four straight-lines of lightlike points which meet at   the origin of coordinates.  See Figure \ref{fig1}, right. Here we point out that a ZMC surface containing a non-degenerate straight-line can be extended by symmetry by the reflection principle \cite{ac,kim}. However, we can not expect  symmetries along lightlike straight-lines. Recently, Akamine and Fujino have proved that   if two lightlike line segments do intersect,   a conelike singularity always appears at that intersection point, and hence there exists the reflection property  \cite{af}. This situation occurs for this surface: by the way, it did not appear in the surface of subsection \ref{s-31}  because there was only one straight-line.

 \begin{figure}[hbtp]
 \begin{center}\includegraphics[width=.3\textwidth]{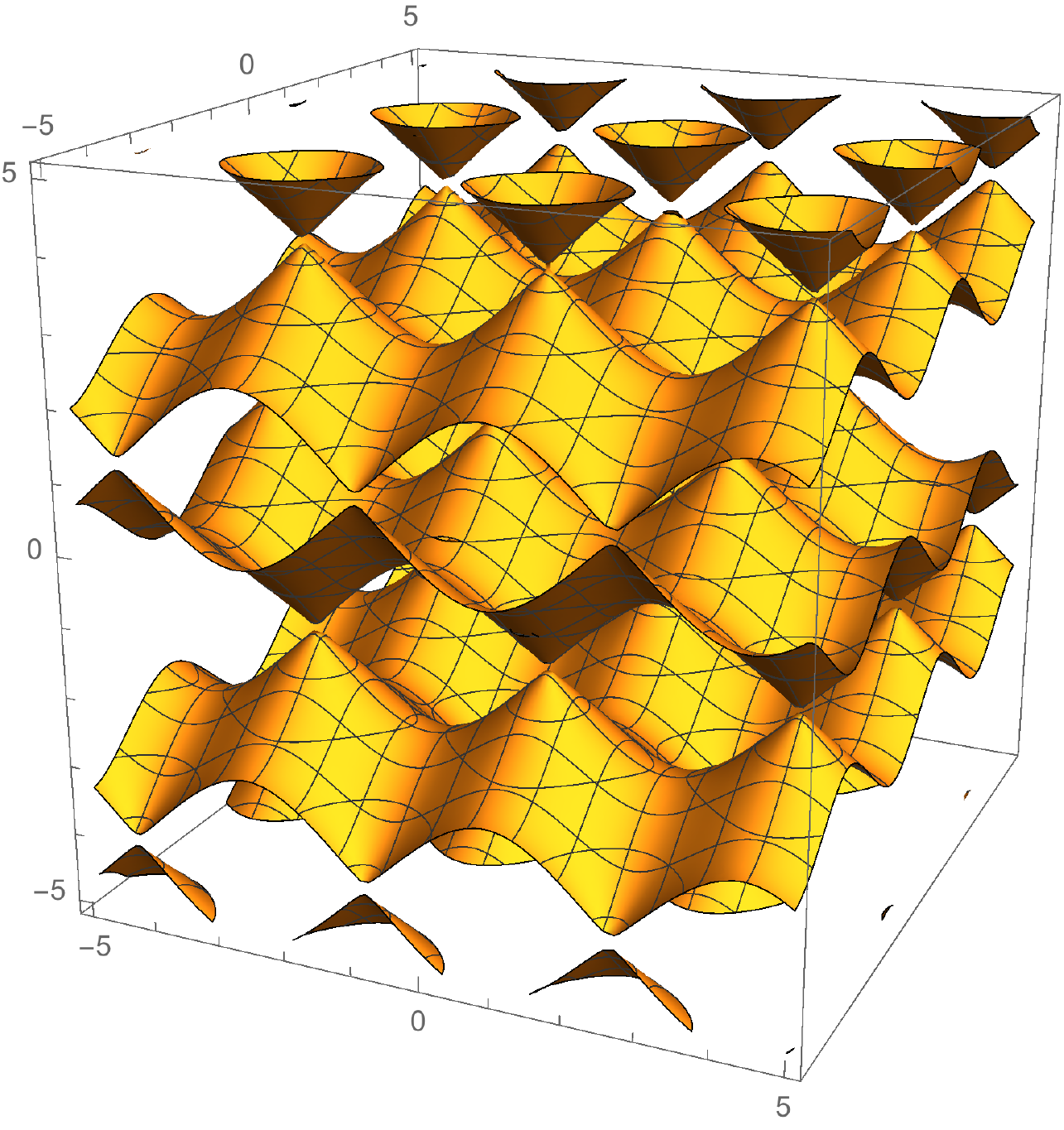}\quad \includegraphics[width=.3\textwidth]{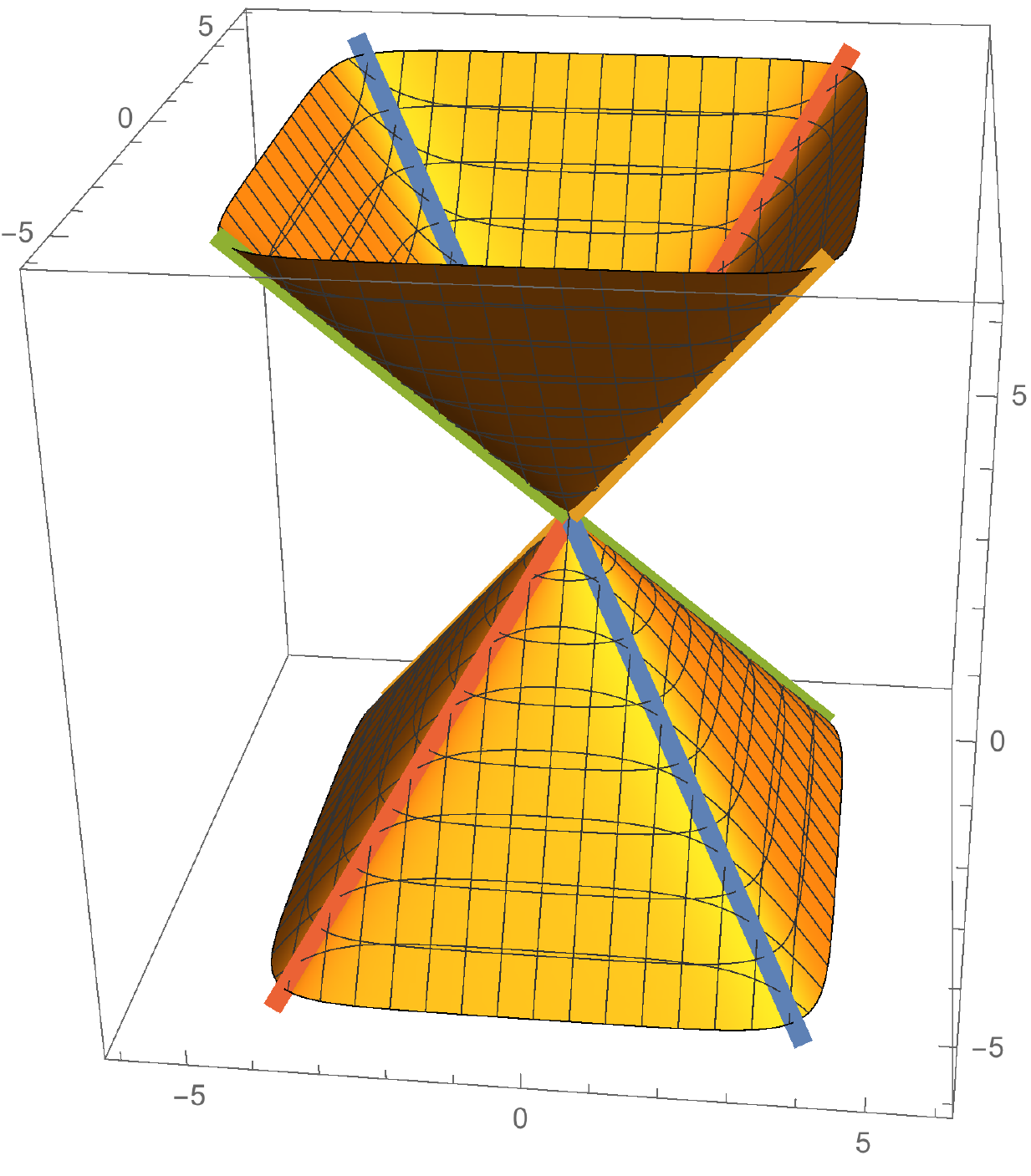}\end{center}
 \caption{Left: the surface $\sin(\alpha\beta x)+\beta^2\sin(\alpha y)+\alpha^2\sin(\beta z)=0$. Right: the surface $\cosh(\alpha\beta x)+\beta^2\cosh(\alpha y)-\alpha^2\cosh(\beta z)=0$, where we have indicated the four straight-lines of lightlike points}\label{fig1}
 \end{figure}

 %%%%%
\subsection{Example where all $b_i$ are $0$ and all $a_i$ are not $0$: case 2}\label{s-33}
%%%%%%%%%%%%%%%%%%%%%%%%%

Let take the constant as  $b_1=b_2=b_3=0$,   $a_1=a_2=a_3/2=a$ and $c_1=c_2=2c_3=2c$, where  $a$ and $c$ are non-zero real numbers. Then the functions are
$$X(u)=a+2c u^2,\quad Y(v)=a+2c v^2,\quad Z(w)=2a+c w^2.$$
Since the functions $X$, $Y$ and $Z$ are positive, the case $a,c<0$ is not possible. The computation of the integral depends on the sign of $a$ and $c$, being  
$$\int\frac{dt}{\sqrt{m+nt^2}}=\left\{\begin{array}{ll}
\frac{1}{\sqrt{n}}\arcsinh\left(\sqrt{\frac{n}{m}}t\right)&m>0,n>0\\
\frac{1}{\sqrt{-n}}\arcsin\left(\sqrt{\frac{-n}{m}}t\right)&m>0,n<0\\
\frac{1}{\sqrt{n}}\arccosh\left(\sqrt{\frac{-n}{m}}t\right)&m<0,n>0.
\end{array}
\right.$$

If $a,c>0$, then the implicit equation of the surface is 
$$\sqrt{\frac{a}{2c}}\sinh(\sqrt{2c}x)+\sqrt{\frac{a}{2c}}\sinh(\sqrt{2c}y)+\sqrt{\frac{2a}{c}}\sinh(\sqrt{c}z)=0.$$
See Figure \ref{fig2}, right.
If $a>0$, $c<0$, then 

$$\sqrt{\frac{-a}{2c}}\sin(\sqrt{-2c}x)+\sqrt{\frac{-a}{2c}}\sin(\sqrt{-2c}y)+\sqrt{\frac{-2a}{c}}\sin(\sqrt{-c}z)=0.$$
 \begin{figure}[hbtp]
 \begin{center} \includegraphics[width=.3\textwidth]{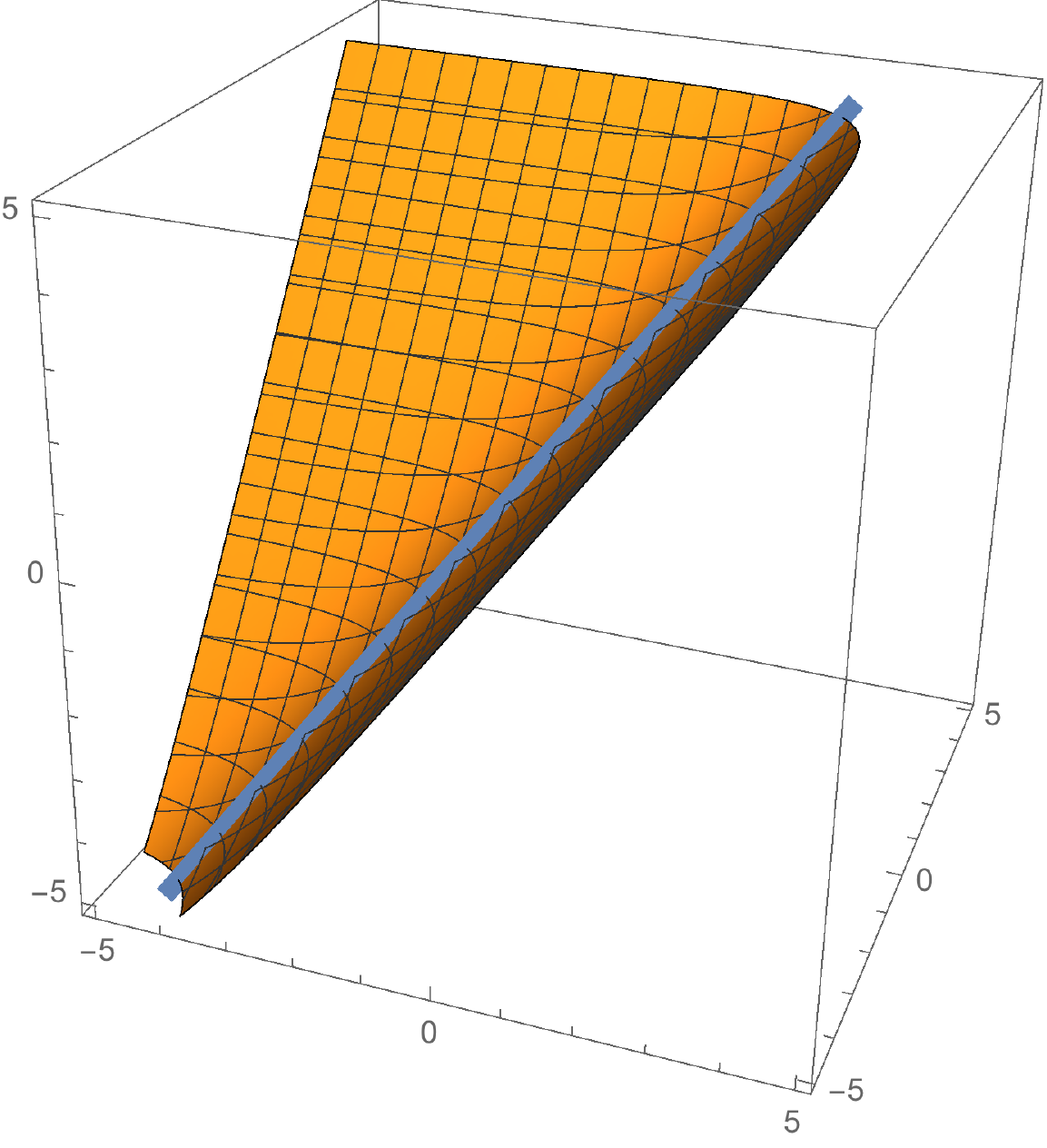}\quad \includegraphics[width=.3\textwidth]{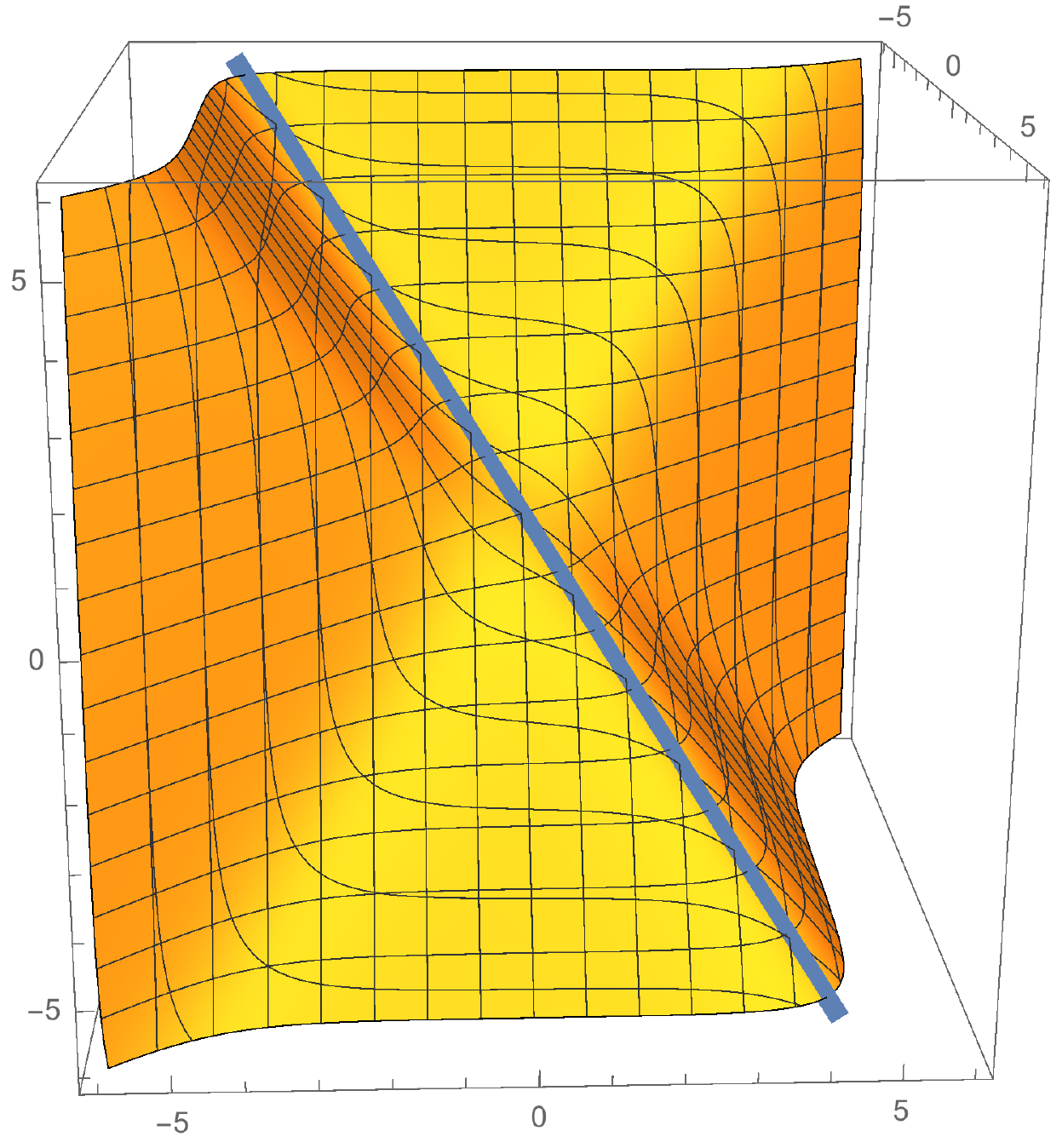} \end{center}
 \caption{Left: the surface  $e^x+e^y-e^{\sqrt{2}z/2}=0$ of subsection \ref{s-31}. Right: the surface  $\sinh(\sqrt{2}x)+\sinh(\sqrt{2}y)+2\sinh(z)=0$ of subsection \ref{s-33}. We have indicated  the straight-lines of lightlike points}\label{fig2}
 \end{figure}

If $a<0$, $c>0$, then 
$$\pm\sqrt{\frac{-a}{2c}}\cosh(\sqrt{2c}x)\pm\sqrt{\frac{-a}{2c}}\cosh(\sqrt{2c}y)\pm\sqrt{\frac{-2a}{c}}\cosh(\sqrt{c}z)=0.$$
By using the equation
$$X(u)+Y(v)-Z(w)=c(u-v)^2, $$
 we can say that the surface is spacelike or timelike if $c<0$ or $c>0$ respectively, except at the points   $\{u=v,w=-2v\}$. 
   
 Let us observe that the solution of the case $a>0$, $c<0$ and the case $a<0$, $c>0$ are similar to the surfaces   of subsection \ref{s-32}. In such a case,  we know  that  the surface   can be extended to lightlike points which are contained  in a set of straight-lines.
 
Finally, in the case   $a,c>0$, the surface is a timelike minimal surface except in the set $u=v$, $w=-2v$. This set is now $f(x)=g(y)$, $h(z)=-2g(y)$, or equivalently,    the straight-line   $\{x=y, z=-\sqrt{2}y\}$.

%%%%%%%%%%%%%%%%
\section{Separable ZMC  surfaces: case $K>0$.}\label{sec4}
%%%%%%%%%%%%%%%%%%%%%%%%%%%%%%

In this section we obtain particular examples of separable ZMC  surfaces  when $K> 0$ in   Theorem \ref{t1}. By   Proposition \ref{pr1}, we can assume that $k=2$ without  loss of generality.  To find explicit examples of separable ZMC surfaces, we follow the same procedure as  in the previous section. Firstly we determine    the real numbers $a_i$, $b_i$ and $c_i$ that satisfy (\ref{var1}), then    we   integrate  the differential equations (\ref{sol1}) and finally we will study the causal character of the surface. 

We divide this section in subsections according the numbers of the differential equations of (\ref{sol1}) that can  be solved by quadratures. The other solutions  will be expressed in terms of elliptic integrals.

 %%%%%%
\subsection{Case where the three differential equations are solved by quadratures}\label{s-41}
%%%%

In this section we give four examples of ZMC surfaces of separable type where   all integrals in (\ref{sol1}) can be  solved by quadratures. 

\subsubsection{Example 1} 

Consider the following constants: 
$$ \begin{array}{lll}
  a_1=1,& b_1=0,&c_1=1\\
 a_2=1,& b_2=0,&c_2=m\\
 a_3=1, & b_3=-m, &c_3=0,
   \end{array}$$
 where $m\in\{-1,1\}$.  Then 
 $$X(u)=1+e^{-2u},\quad Y(v)=1+me^{-2v},\quad Z(w)=1-me^{2w}.$$
The integration of the first equation yields $f(x)=\log(\sinh(x))$. The integration of the other two differential equations depends on the sign of $m$: 
$$g(y)=\left\{\begin{array}{ll}
\log(\sinh(y))& m=1\\
\log(\cosh(y))& m=-1,\\
\end{array}
\right.\quad h(z)=\left\{\begin{array}{ll}
-\log( \cosh(z))& m=1\\
-\log( \sinh(z))& m=-1.\\
\end{array}
\right.$$
In each case of $m$, the implicit equation of the  surface is given by 
\begin{equation}\label{eq4111}
 \begin{split}\sinh(x)\sinh(y)=\cosh(z)& \mbox{ (case $m=1$)}\\
 \sinh(x)\cosh(y)=\sinh(z)& \mbox{ (case $m=-1$).}
\end{split}
\end{equation}
See Figure  \ref{fig3}.  The second surface in (\ref{eq4111}) is known as the timelike Scherk surface of second kind \cite{fu1} which is  an entire graph over the $xy$-plane.

For the causal character of the surface, 
 \begin{eqnarray*}
 X(u)+Y(v)-Z(w)&=&1+e^{-2u}+me^{-2v}+me^{-2u-2v}\\
 &=&(1+e^{-2u})(1+me^{-2v})=X(u)Y(v)>0.
 \end{eqnarray*}
 In case $m=1$, the surface is timelike and it can not be extended to regions of lightlike points. In case $m=-1$, the surface is timelike again and also it can be extended to regions of lightlike points when $g'(y)=0$, which is equivalent to $y=0$.  From the equation of the surface, we deduce $\sinh(x)=\sinh(z)$, so we have $x=z$. Thus the lightlike points consists in the straight-line $\{y=0,x=z\}$. Therefore the surface is a timelike minimal surface that can be extended to one lightlike straight-line but the surface does not change type across this line.    
 
  \begin{figure}[hbtp]
\begin{center}\includegraphics[width=.3\textwidth]{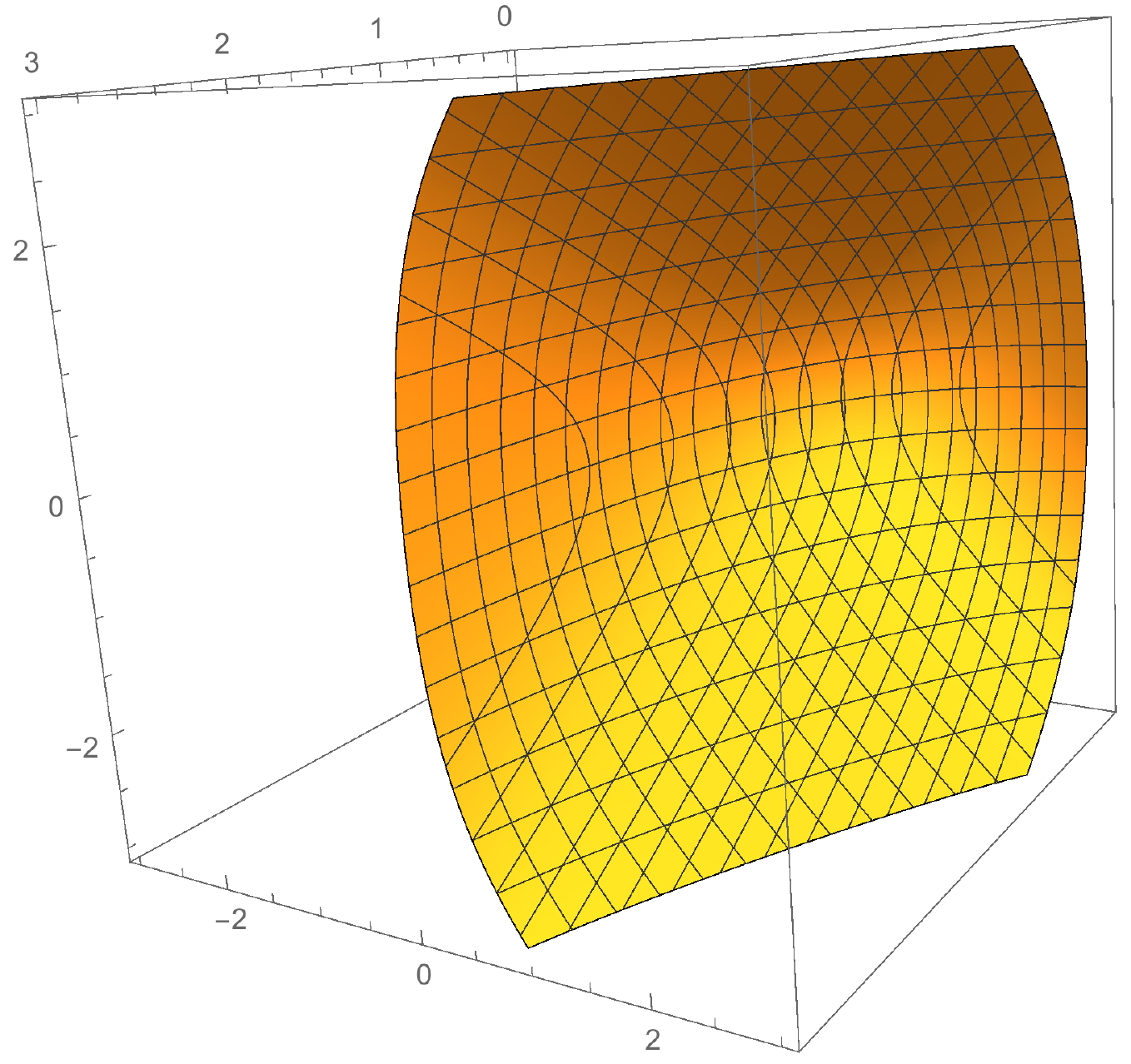}\quad \includegraphics[width=.3\textwidth]{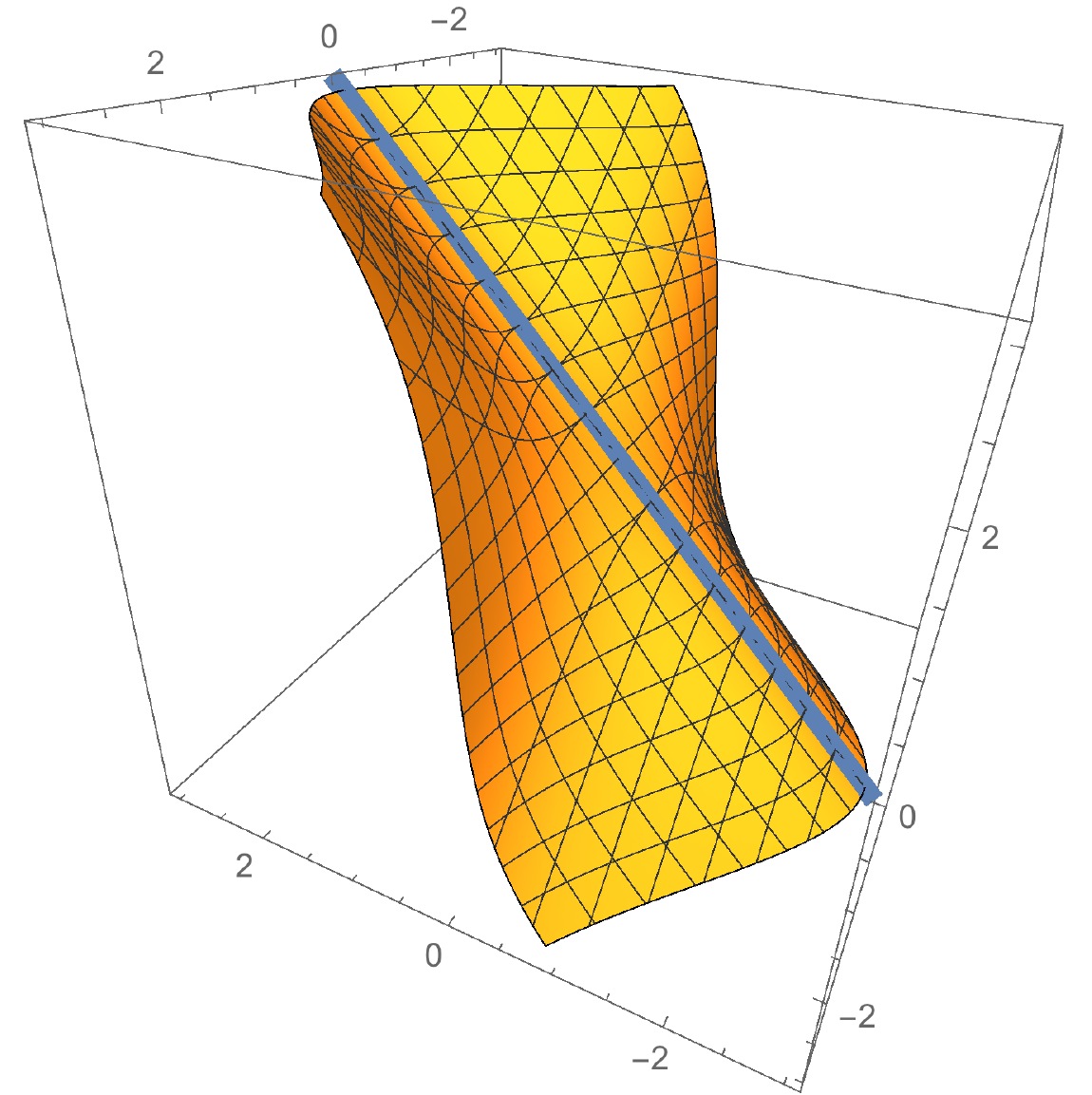}\end{center}
 \caption{Left: the surface  $ \sinh(x)\sinh(y)=\cosh(z)$. Right: the surface $\sinh(x)\cosh(y)=\sinh(z)$  }\label{fig3}
 \end{figure}

 \subsubsection{Example: the Scherk surfaces} 
Consider the following constants: 
$$\begin{array}{lll}
 a_1=1, & b_1=0, &c_1=-1\\
  a_2=1, & b_2=0,& c_2=m\\
 a_3=1, & b_3=m,& c_3=0,
   \end{array}$$
 where $m\in\{-1,1\}$.  Then 
 $$X(u)=1-e^{-2u},\quad Y(v)=1+me^{-2v},\quad Z(w)=1+me^{2w}.$$
The integration yields $f(x)=\log(\cosh(x))$ and
$$g(y)=\left\{\begin{array}{ll}
\log(\sinh(y))& m=1\\
\log(\cosh(y))& m=-1\\
\end{array}
\right.\quad h(z)=\left\{\begin{array}{ll}
-\log( \sinh(z))& m=1\\
-\log( \cosh(z))& m=-1.\\
\end{array}
\right.$$
For $m=1$,  the surface is $\cosh(x)\sinh(y)=\sinh(z)$, which appeared in the above subsection interchanging the roles of the variables $x$ and $y$. 

For $m=-1$, the implicit equation of surface is  $ \cosh(x)\cosh(y)=\cosh(z)$, see Figure  \ref{fig4}, left. This surface is called the timelike Scherk surface of first kind \cite{fu1}. The causal character of the surface is given by the sign of the function
 $$X(u)+Y(v)-Z(w)=(1-e^{-2u})(1+me^{-2v})=f'(x)^2g'(y)^2,$$
 and it says that   the surface is timelike. We study the  extension of the surface to regions of lightlike points. This occurs if $f'(x)=0$ or $g'(y)=0$, that is,   $\sinh(x)=0$ or $\sinh(y)=0$. We conclude that this region is formed by four straight-lines of equations $\{x=0,z=\pm y\}$ and $\{y=0, z=\pm x\}$. Here we have a conelike point at the origin according \cite{af}.
 
 If $m=-1$,   it  is possible to obtain new examples of separable ZMC surfaces by changing the signs of the constants $a_i$, $b_i$ and $c_i$ (this is not possible to do it for $m=1$ because the functions $Y$ and $Z$ would be negative). Now
$$\begin{array}{lll}
 a_1=-1, & b_1=0,&c_1=1\\
 a_2=-1, &b_2=0, &c_2=1\\
 a_3=-1, & b_3=1,& c_3=0.
   \end{array}$$
The integration of the three differential equations gives the surface as 
 $$\{(x,y,z)\in\l^3: \sin(x)\sin(y)=\sin(z)\}.$$
  This surface is known in the literature as the spacelike Scherk surface \cite{fu1}, see Figure \ref{fig4}, right. Since the sign of  the function $X+Y-Z$ has  changed, the surface is now spacelike and can be extended to lightlike points when $f'(x)g'(y)=0$. This occurs  when $\cos(x)=0$ or $\cos(y)=0$. Thus the set of lightlike points is  $\{x=\pi/2+2\pi\mathbb{Z}, y=z+2\pi\mathbb{Z}\}\cup\{x=\pi-z+2\pi\mathbb{Z}, y=\pi/2+2\pi\mathbb{Z}\}$. This maximal surface is a triply periodic surface and belongs to a family of triply periodic maximal surfaces which contains as particular examples, the H-type Schwarz surface and the D-type maximal surface \cite{fu4}. Moreover, it is also the Scherk saddle tower under the motion of the Wick rotation of  $\e^3$ \cite{as}.

    %%%%%%%

\begin{figure}
\begin{center}
\includegraphics[width=.3\textwidth]{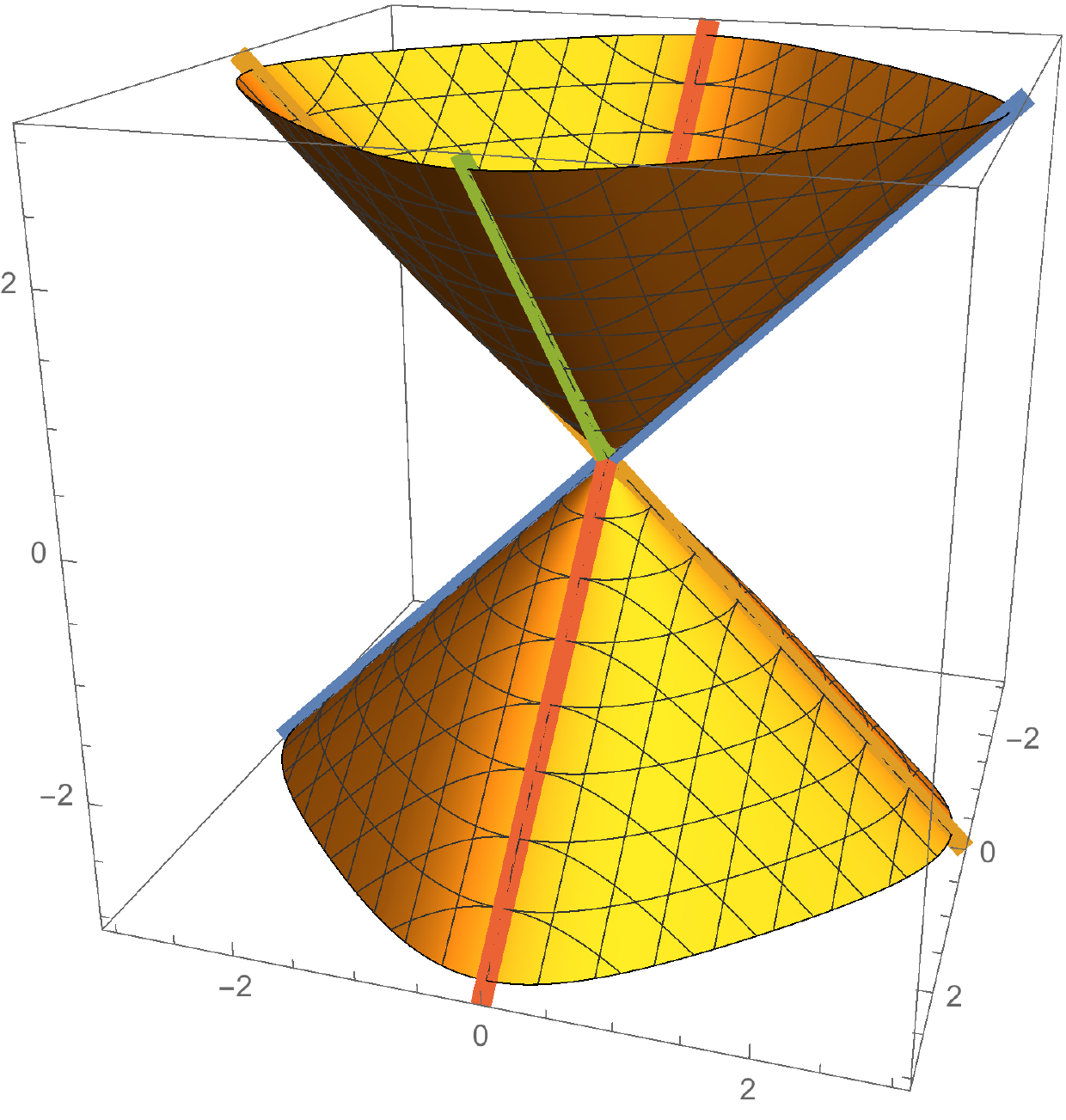}\quad \includegraphics[width=.3\textwidth]{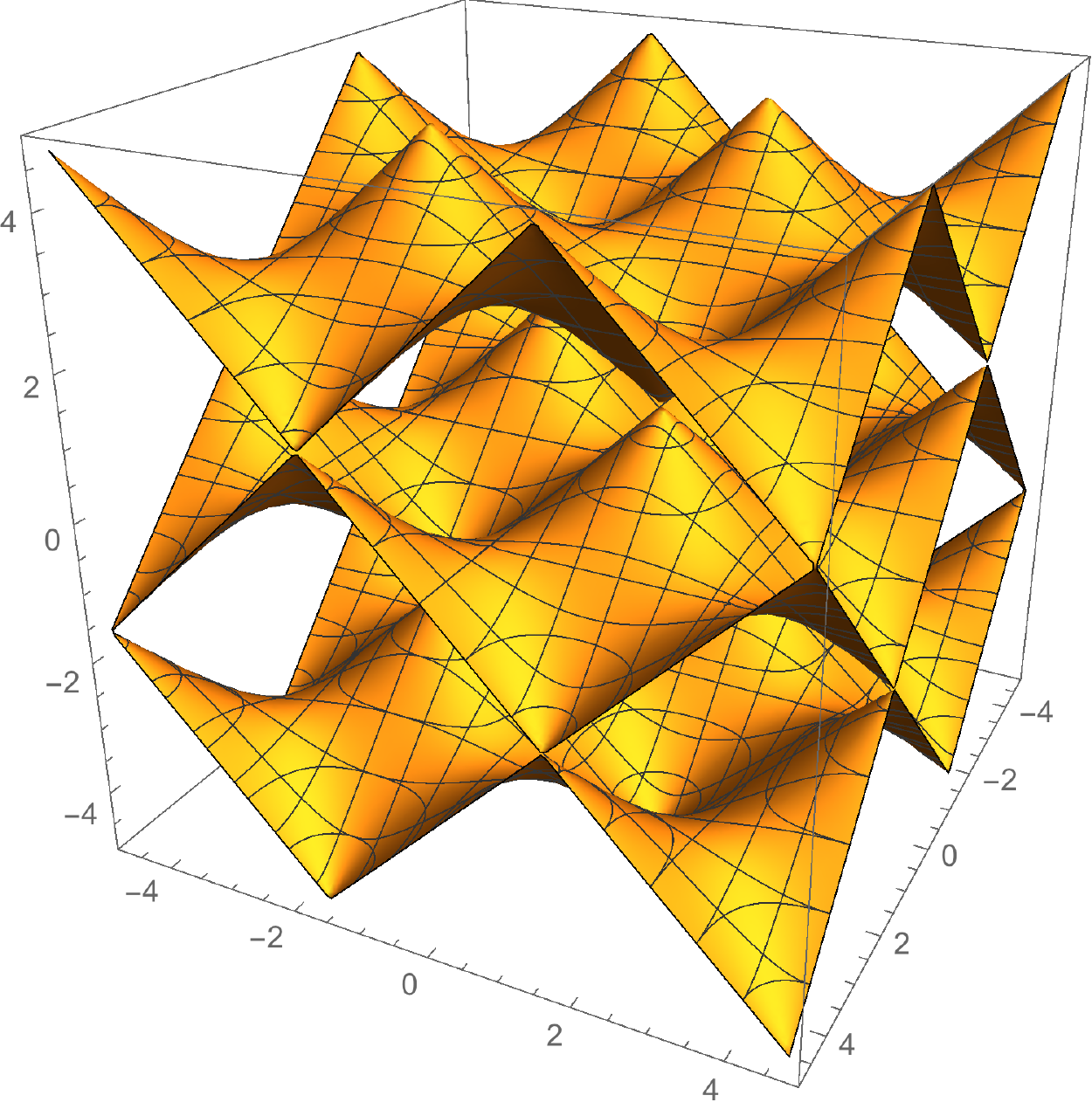}
\end{center}
 \caption{Left:  the surface $ \cosh(x)\cosh(y)=\cosh(z)$. Right: the surface  $\sin(x)\sin(y)=\sin(z)$}\label{fig4}
 \end{figure}
 
 %%%%%%%%%%%%%%%%%%%%%%%%%%%%%%%%%%%%%%%%%%
\subsubsection{Example 3: helicoids}
 A right helicoid in $\l^3$ is the surface obtained by moving a straight-line contained in a plane by means of a uniparametric group of skew motions of $\l^3$ around an axis contained in the plane (\cite{bkp}). The right helicoids have zero mean curvature. In this subsection we will obtain the right helicoids whose axis is spacelike or timelike. These surfaces   have regions of the three types of causal character.  Firstly, consider the following constants
$$\begin{array}{lll}
  a_1=0,&b_1=1,& c_1=0\\
 a_2=0,&b_2=0 , & c_2=1\\
  a_3=2,& b_3=1, &c_3=1.
   \end{array}$$
   Then the functions are
 $$X(u)=e^{2u},\quad Y(v)=e^{-2v},\quad Z(w)=2+e^{2w}+e^{-2w}.$$
The integration yields 
$$f(x)=-\log(-x),\quad g(y)=\log(y),\quad h(z)=\log(\tan(z)),$$
and the surface writes as $x=-y\tan(z)$. It is the right helicoid whose axis is  the   $z$-axis (timelike axis) and also known as the   elliptic helicoid (\cite{kim}) or the helicoid  of the first kind (\cite{ko}). 

Other choice of the constants is the following: 
$$\begin{array}{lll}
 a_1=0, &b_1=1,& c_1=0\\
 a_2=-2, & b_2=1 ,& c_2=1\\
   a_3=0, & b_3=0,&c_3=1.
   \end{array}$$
   Then 
 $$X(u)=e^{2u},\quad Y(v)=-2+e^{2v}+e^{-2v},\quad Z(w)= e^{-2w}.$$
The integration  of the three differential equations yields 
$$f(x)=-\log(-x),\quad g(y)=\log(-\tanh(y)),\quad h(z)=\log(z),$$
and the implicit equation of the surface is $x=z\tanh(y)$. This surface is the right helicoid whose axis is   the $y$-axis (spacelike axis) and  known as the hyperbolic helicoid (\cite{kim}) or the helicoid  of the second kind (\cite{ko}). Each helicoid is transformed into the other one by means of the Wick rotation \cite{as}.

\subsubsection{Example 4} 
Consider the following constants: 
$$\begin{array}{lll}
 a_1=-2, & b_1=1,& c_1=1\\
  a_2=-2, & b_2=1,&c_2=1\\
  a_3=-1, &b_3=1/2 ,& c_3=\frac12.
   \end{array}$$
 Then 
 $$X(u)=-2+e^{2u}+e^{-2u},\quad Y(v)=-2+e^{2v}+e^{-2v},\quad Z(w)=-1+\frac12 e^{2w}+\frac12e^{-2w}.$$
The integration of the first differential equation is
$$x=\int \frac{df}{\sqrt{-2+e^{2f}+e^{-2f}}}=\int^{e^f}\frac{d\tau}{\sqrt{(1-\tau^2)^2}},$$
and similarly for the second one. For the third equation, we have 
$$z=\int \frac{dg}{\sqrt{-1+e^{2f}/2+e^{-2f}/2}}=\int^{e^h}\frac{\sqrt{2}d\tau}{\sqrt{(1-\tau^2)^2}}.$$
The integration  by quadratures depends on the case of $\tau^2<1$ or $\tau^2>1$. If $\tau^2<1$ in the first equation, that is, if $f<0$, then  $f(x)=\log\tanh(x)$ and if $\tau^2>1$ ($f>0$), then 
$f(x)=-\log\tanh(x)$. Since the   functions $f$, $g$ and $h$ can not have the same sign, without loss of generality, we suppose that $f,g<0$ and $h>0$. In such a case,  $g(y)=\log(\tanh(y))$ and  $h(z)=-\log(\tanh(z/\sqrt{2}))$. Thus the   surface is
$$\{(x,y,z)\in\l^3: \tanh(x)\tanh(y)=\tanh(z/\sqrt{2})\}.$$
This surface appeared in  \cite{st} and it is  of mixed type: see Figure \ref{fig5}, left.

%%%%%%%%%%%%%%%%%%%%%%%%
\subsection{Case where two differential equations are solved by quadratures}

We will give three examples where two differential equations in (\ref{sol1}) can be solved by quadratures and the solution of the other differential equation is given in terms of elliptic integrals.
\subsubsection{Example  1} \label{s-421}
 
 Consider the following constants that satisfy (\ref{var1}):
$$\begin{array}{lll}
 a_1=1, & b_1=0 ,& c_1=1\\
   a_2=-1, & b_2=1,& c_2=0\\
 a_3=2m-1, & b_3=m,&c_3=m-1,
   \end{array}$$
   
 where $m$ is a real parameter.  The    functions $X$, $Y$ and  $Z$ are 
 $$X(u)=1+e^{-2u},\quad Y(v)=-1+e^{2v},\quad Z(w)=2m-1+m e^{2w}+(m-1)e^{-2w}.$$
 Notice that $Z(w)$ is a positive function, so the value of the parameter $m$ is not arbitrary: for example, if $m=0$, then $Z(w)=-1-e^{-2w}<0$, which is not possible. The solutions of the first two differential equations are
$$f(x)=\log(\sinh(x)),\quad g(y)=- \log(\sin(y)).$$

For the third equation,   
$$\int^{e^h}\frac{d\tau}{\sqrt{m \tau^4+(2m-1)\tau^2+m-1}}=\pm z,$$

or 
\begin{equation}\label{eq-z}
\int^{e^h}\frac{d\tau}{\sqrt{(\tau^2+1)(m\tau^2-1+m)}}=\pm z.
\end{equation}
This integral is elliptic and can not be solved by quadratures in general. We show two examples by taking particular values of $m$.  
\begin{enumerate}
\item Case $m=1/2$. Then  $Z(w)=\sinh(2 w)$  and (\ref{eq-z}) is
\begin{equation}\label{t4}
 \xi:=\sqrt{2}\int^{e^{h}}\frac{1}{\sqrt{\tau^4-1}}d\tau=z.
 \end{equation}
Let $V$ the inverse of the function $\xi$. Then  
$$h(z)=\log\left(V(\frac{z}{\sqrt{2}})\right)$$
and the      surface is
$$\{(x,y,z)\in\l^3: \sinh(x) V(\frac{z}{\sqrt{2}})=  \sin(y)\}.$$
On the other hand, 
\begin{eqnarray*} X(u)+Y(v)-Z(w)&=&e^{-2u}+e^{2v}-\frac{e^{2w}-e^{-2w}}{2}\\
&=&\frac12 e^{-2u-2v}\frac{\cosh(x)^4-\cos(y)^4}{\sin(y)^4}>0
\end{eqnarray*}
and the surface is timelike at every point, except when $\cosh(x)=\cos^2(y)=1$. However, there are not points with $\cosh(x)=1$ and $\sin(y)=0$ because of the definitions of the functions $f$ and $g$. Thus the surface is timelike and can not be extended to lightlike points.

\item Case $m=1$. Now the integral (\ref{eq-z}) can be solved by quadratures, exactly,   $h(z)=-\log(\sinh(z))$. The surface is 
$$\{(x,y,z)\in\l^3: \sinh(x)=\sin(y)\sinh(z)\}.$$
This surface is singly periodic along the $y$-direction by the periodicity of $\sin(y)$, see Figure \ref{fig5}, middle. 
We study the causal character of the surface. Since $w=-u-v$, we have 
$$X(u)+Y(v)-Z(w)=e^{-2u}+e^{2v}-e^{2w}-1=(1+e^{-2u-2v})(e^{2v}-1)> 0$$
because $e^{2v}-1=g'(y)^2>0$. Then the surface is timelike. At the set of points where $g'(y)=0$, that is $\cos(y)=0$, the surface can be extended into a region of lightlike points. By the equation of the surface, $\sinh(x)=\sinh(z)$ or $\sinh(x)=-\sinh(z)$. This set is formed by  the straight-lines  $\{ x=z, y=\pi/2+2\pi\mathbb{Z}\}\cup \{ x=-z, y=-\pi/2+2\pi\mathbb{Z}\}$.  Let us observe that this surface is a singly periodic surface along the $y$-axis and it is the Wick rotation of the timelike Scherk surface of second kind that appeared in (\ref{eq4111}): see details in \cite{as}.  
\end{enumerate}

 \begin{figure}[hbtp]
\begin{center}\includegraphics[width=.3\textwidth]{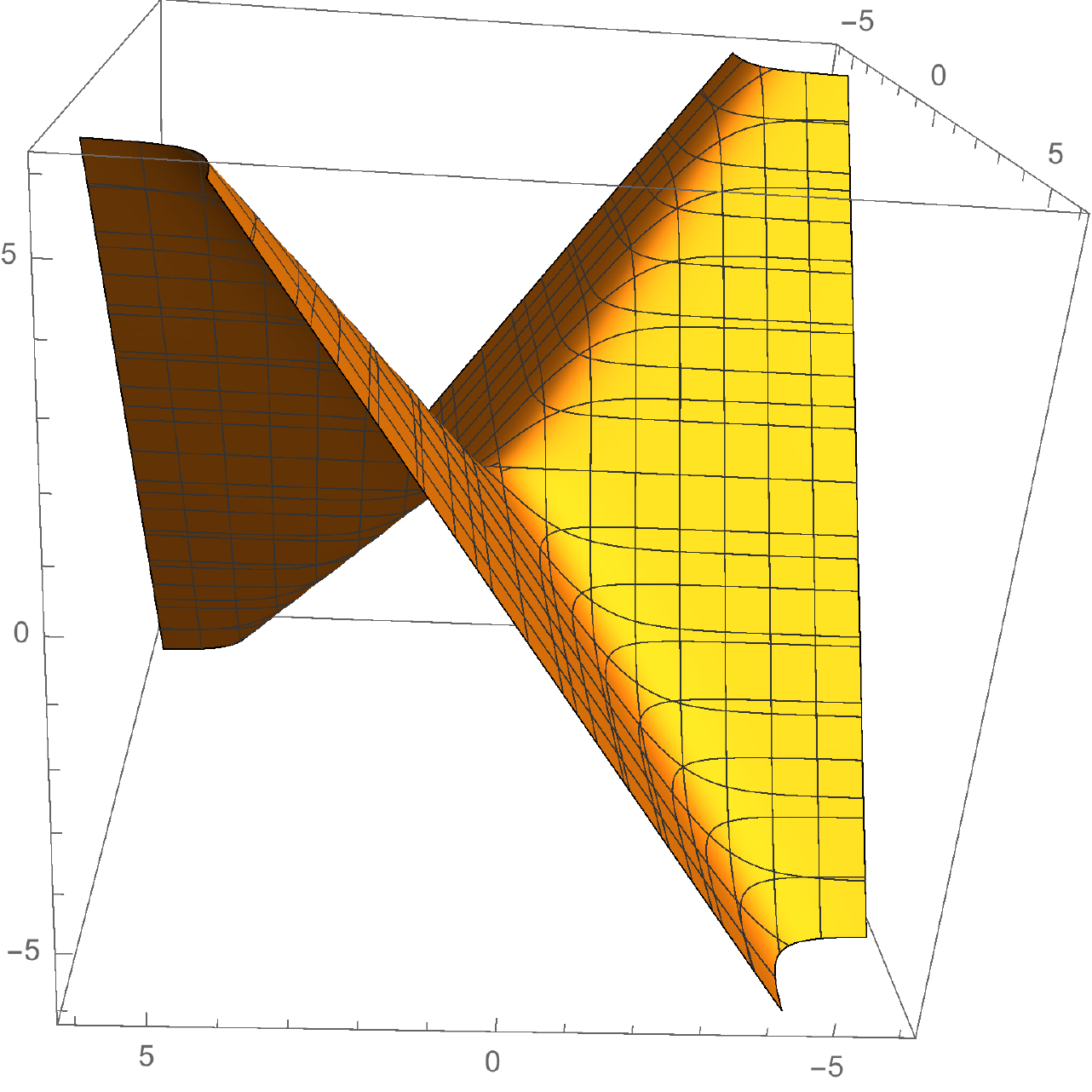}\quad \includegraphics[width=.3\textwidth]{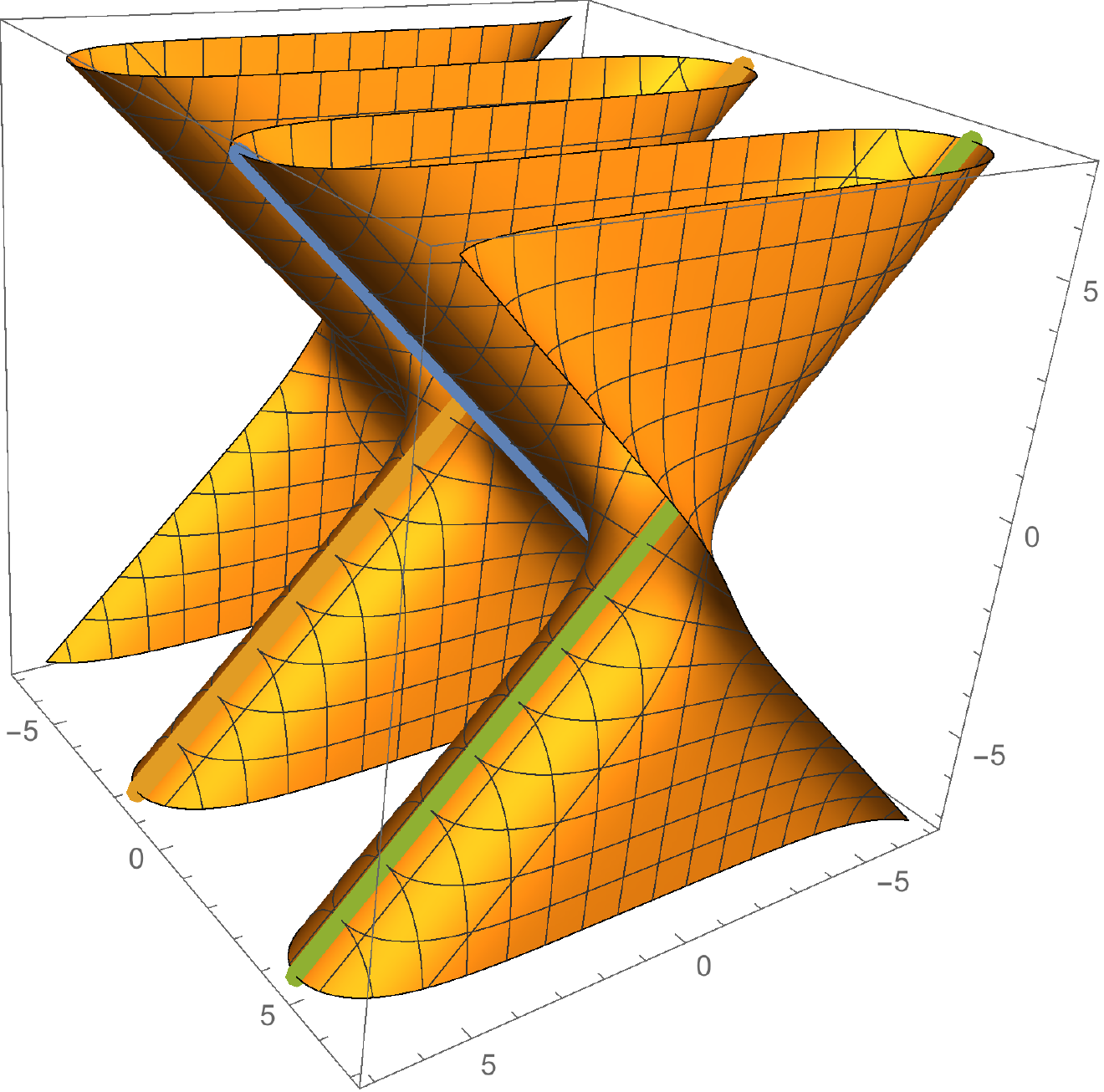}\quad \includegraphics[width=.3\textwidth]{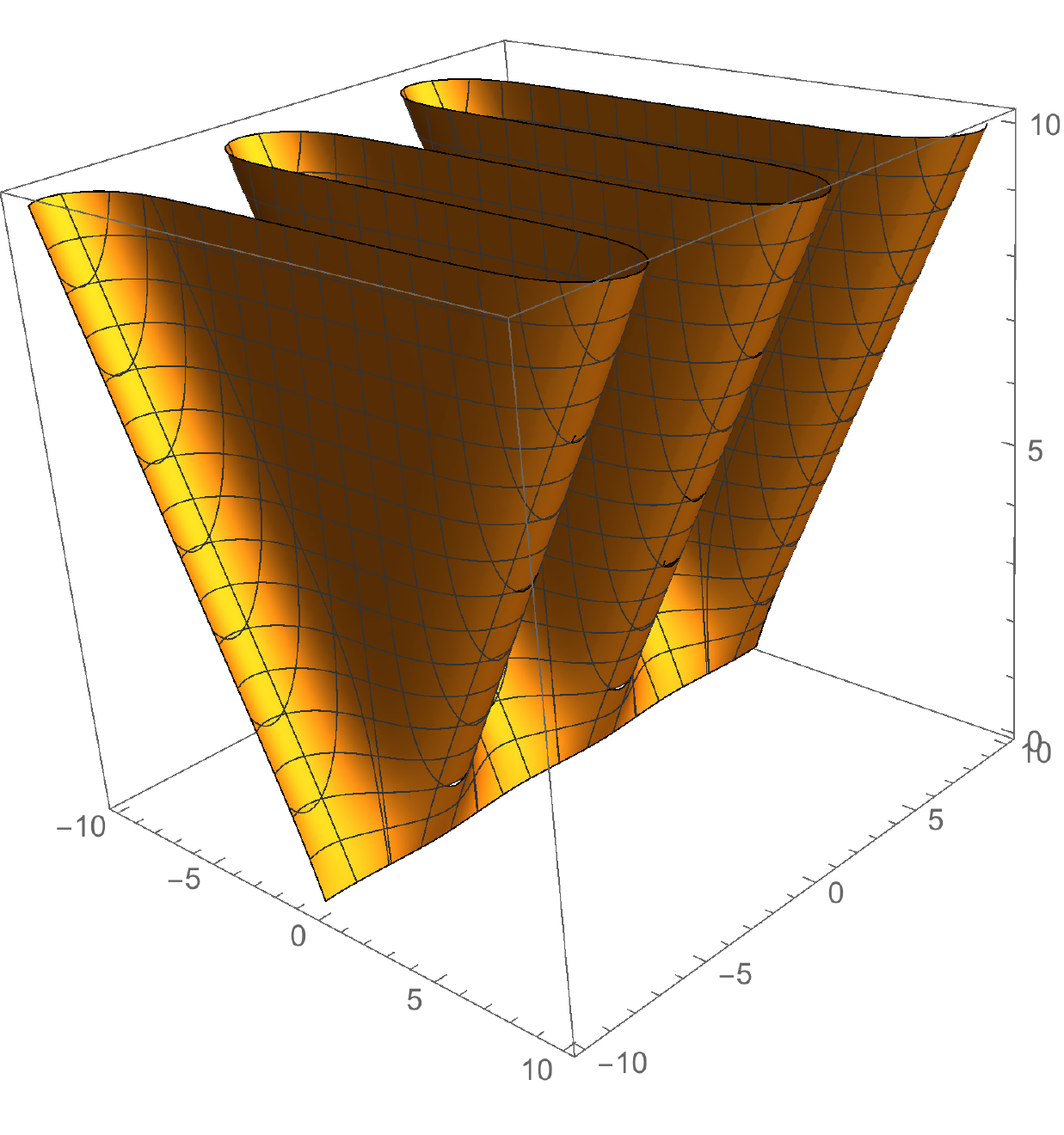}\end{center}
 \caption{Left: the surface $\tanh(x)\tanh(y)=\tanh(z/\sqrt{2})$. Middle: the surface $\sinh(x)=\sin(y)\sinh(z)$. Right: the surface $M(y/\sqrt{2})=\sinh(x)/\sinh(z)$}\label{fig5}
 \end{figure}

 %%%%%%
\subsubsection{Example  2} 
%%%%

 Consider the following constants: 
$$\begin{array}{lll}
 a_1=1,& b_1=1,& c_1=0\\
 a_2=1-2m, & b_2=m-1,&c_2=m\\
  a_3=1,&b_3=0,&c_3=1,
   \end{array}$$
 where $m$ is a real parameter. The functions $X$, $Y$ and $Z$ are   
$$X(u)=1+e^{2u},\quad Y(v)=1-2m+(m-1)e^{2v}+me^{-2v},\quad Z(w)=1+e^{-2w}.$$
The integrations of the first and third differential equation yield 
$$f(x)=-\log(\sinh(x)),\quad h(z)=\log(\sinh(z)). $$
For the function $g$, we have the elliptic integral
\begin{equation}\label{eq-422}
\pm y=\int^{e^g}\frac{d\tau}{\sqrt{(m-1) \tau^4+(1-2m)\tau^2+m}}=\int^{e^g}\frac{d\tau}{\sqrt{(\tau^2-1)((m-1)\tau^2-m)}}.
\end{equation}

\begin{enumerate}
\item Case $m=0$. Then   $g(y)=-\log(\cosh(y))$ and the implicit equation of the surface is 
$\sinh(x)\cosh(y)=\sinh(z)$ which appeared in the subsection \ref{s-41}. 
\item Case  $m=1$. Then  the surface   is  $\sinh(x)=\sin(y)\sinh(z)$, which appeared again in the above subsection.   
\item Case $m=1/2$. The integral (\ref{eq-422})  is now
\begin{equation}\label{eqt4}
\psi:=\int^{e^g}\frac{d\tau}{\sqrt{1-\tau^4}}=\frac{y}{\sqrt{2}}.
\end{equation}
Let $M(\psi)$ be the inverse of the function $\psi$. Then  
$$g(y)=\log\left(M(\frac{y}{\sqrt{2}})\right),$$
and the surface is 
$$\{(x,y,z)\in\l^3: \sinh(z)M(\frac{y}{\sqrt{2}})= \sinh(x)\}.$$
This surface is singly periodic along the $y$-axis, see Figure \ref{fig5}, right. For the causal character, we determine the sign of the function
\begin{eqnarray*}
X(u)+Y(v)-Z(w)&=&\frac12e^{-2u-2v}\left(e^{4u}(e^{2w}+1)^2-(e^{2u}+1)^2\right)\\
&=&\frac12e^{-2u-2v}\frac{\cosh(z)^4-\cosh(x)^4}{\sinh(x)^4},
\end{eqnarray*}
hence the surface is of mixed type. The lightlike points is the set of points such that $\cosh(x)=\cosh(z)$, that is, $\{(x=z, M(y/2)=1\}\cup  \{(x=-z, M(y/2)=1\}$ up to periodicity. By the function $M$, this set of points is formed by straight-lines.  
\end{enumerate}
 %\begin{figure}
%\begin{center}\includegraphics[width=.4\textwidth]{ky21.pdf}\end{center}
 %\caption{}\label{ky2}
% \end{figure}

 %%%%%%
\subsubsection{Example  3} 
%%%%
We show two new examples of separable ZMC surfaces with similar choices of the constants. The first example corresponds with the choice of constants 
$$\begin{array}{lll}
 a_1=-1, &b_1=0,& c_1=1\\
 a_2=1, &b_2=-1,& c_2=0\\
 a_3=m, & b_3=\frac{1-m}{2},& c_3=\frac{-1-m}{2},
   \end{array}$$
 where $m$ is a real parameter. The functions $X$, $Y$ and $Z$ are   
$$X(u)=-1+e^{-2u},\quad Y(v)=1-e^{2v},\quad Z(w)=m+\frac{1-m}{2}e^{2w}-\frac{1+m}{2}e^{-2w}.$$
The integration of the first two differential equations yields 
$$f(x)=\log(\sin(x)),\quad g(y)=-\log(\cosh(y)). $$
For the function $h$, we have
\begin{equation}\label{eq-423}
\int^{e^h}\frac{d\tau}{\sqrt{(1-m) \tau^4+2m\tau^2-1-m}}=\int^{e^h}\frac{d\tau}{\sqrt{(\tau^2-1)((m-1)\tau^2-m-1)}}=\frac{z}{\sqrt{2}}.
\end{equation}

In general the integral (\ref{eq-423}) is elliptic. We show two particular examples.
\begin{enumerate}
\item Case $m=1$. Then the integration of (\ref{eq-423}) yields$h(z)=\log(\cosh(z))$ and the   surface is 
$$\{(x,y,z)\in\l^3:\sin(x)\cosh(z)=\cosh(y)\}.$$
 Notice that the values of $x$ such that $\sin(x)=0$ are not in the domain of the surface, so the surface is not singly periodic along the $x$-axis: see Figure \ref{fig6}, left. In this case, 
$$X(u)+Y(v)-Z(w)=(1-e^{-2u})(e^{-2w}-1)> 0$$ 
and the surface is timelike. The surface extends to lightlike points in the set $f'(x)h'(z)=0$, that is, $\cos(x)=0$ or $\sinh(z)=0$. Without loss of generality,  we assume that the domain of $f$ is $(0,\pi)$, then $\cos(x)=0$ yields $x=\pi/2$, so the set of lightlike points is formed by two straight-lines, namely, $\{x= \pi/2, z=\pm y\}$. If $\sinh(z)=0$, then $z=0$ and from the equation of the surface,  $\sin(x)=\cosh(y)$, hence $x=\pi/2$, $y=0$, showing that the point $(\pi/2,0,0)$ is a singularity of the surface. Furthermore the surface reflects along this singularity (\cite{af}).
\item Case  $m=0$. Then the integral (\ref{eq-423}) is
$$\int^{e^h}\frac{d\tau}{\sqrt{  \tau^4-1}}=\frac{z}{\sqrt{2}}.$$
This  integral has appeared in (\ref{t4}). Then the   surface is 
$$\{(x,y,z)\in\l^3: \sin(x)V(\frac{z}{\sqrt{2}})= \cosh(y)\}.$$
This surface also appeared  in \cite{st} and it is a doubly periodic surface by the periodicity of the functions $V(z/\sqrt{2})$ and $\sin(x)$.
\end{enumerate}

The second example corresponds with  the constants  
$$\begin{array}{lll}
 a_1=-1, & b_1=0,& c_1=1\\
 a_2=m, & b_2=-\frac{m+1}{2},& c_2=\frac{1-m}{2}\\
   a_3=-1,& b_3=1,& c_3=0,
   \end{array}$$
   
 where $m\in\r$. The integration of $f$ and $h$ is  
$$f(x)=\log(\sin(x)),\quad h(z)=-\log(\sin(z)). $$
For the function $g$, we have
$$\int^{e^g}\frac{d\tau}{\sqrt{-(m+1) \tau^4+2m\tau^2+1-m}}=\int^{e^g}\frac{d\tau}{\sqrt{(\tau^2-1)(-(m+1)\tau^2+1-m)}}=\frac{y}{\sqrt{2}}.$$
We discuss two cases:
\begin{enumerate}
\item Case $m=1$. Then $g(y)=-\log(\cosh(y))$ and   the surface is 
$$\{(x,y,z)\in\l^3: \sin(x)=\cosh(y)\sin(z)\}.$$
 Let us observe that the surface is doubly-periodic in the $xz$-plane, see Figure \ref{fig6}, middle. Moreover, 
\begin{eqnarray*}
X(u)+Y(v)-Z(w)&=&1+e^{-2u}-e^{2v}-e^{-2u-2v}=(e^{2v}-e^{-2u})(e^{-2v}-1)\\
&=&\frac{\sin(x)^2-\cosh(y)^2}{\sin(x)^2\cosh(y)^2}(\cosh(y)^2-1)\leq 0.
\end{eqnarray*}
Thus the surface is spacelike except in those points where $y=0$. By the equation of the surface, $\sin(x)=\sin(z)$ and this set of points are the straight-lines with equation
$\{y=0,z=x+2\pi\mathbb{Z}\}\cup \{y=0,z=\pi-x+2\pi\mathbb{Z}\}$. 
\item Case $m=0$. Now the elliptic integral coincides with (\ref{eqt4}) and  the equation of the surface is 
$$\{(x,y,z)\in\l^3: \sin(x)M(y/\sqrt{2})= \sin(z)\}.$$ 
This surface is doubly periodic because the functions $M(y/\sqrt{2})$ and $\sin(z)$ are periodic. In  Figure \ref{fig6}, right we show a piece of the surface. In fact, and up periodic translations,  the surface contains the pair of orthogonal spacelike lines $\{x=0,z=0\}\cup\{z=0,y=y_o\}$, with $M(y_o/\sqrt{2})=0$, $y_o\not=0$, where the surface can be repeated by symmetry reflections thanks to the reflection principle for maximal surfaces \cite{ac}. Also the surface  contains singularities at the points $(\pi/2,0,\pi/2)$ and $(-\pi/2,0,\pi/2)$ and their translations by periodicity.  
\end{enumerate}
 \begin{figure}[hbtp]
\begin{center}\includegraphics[width=.3\textwidth]{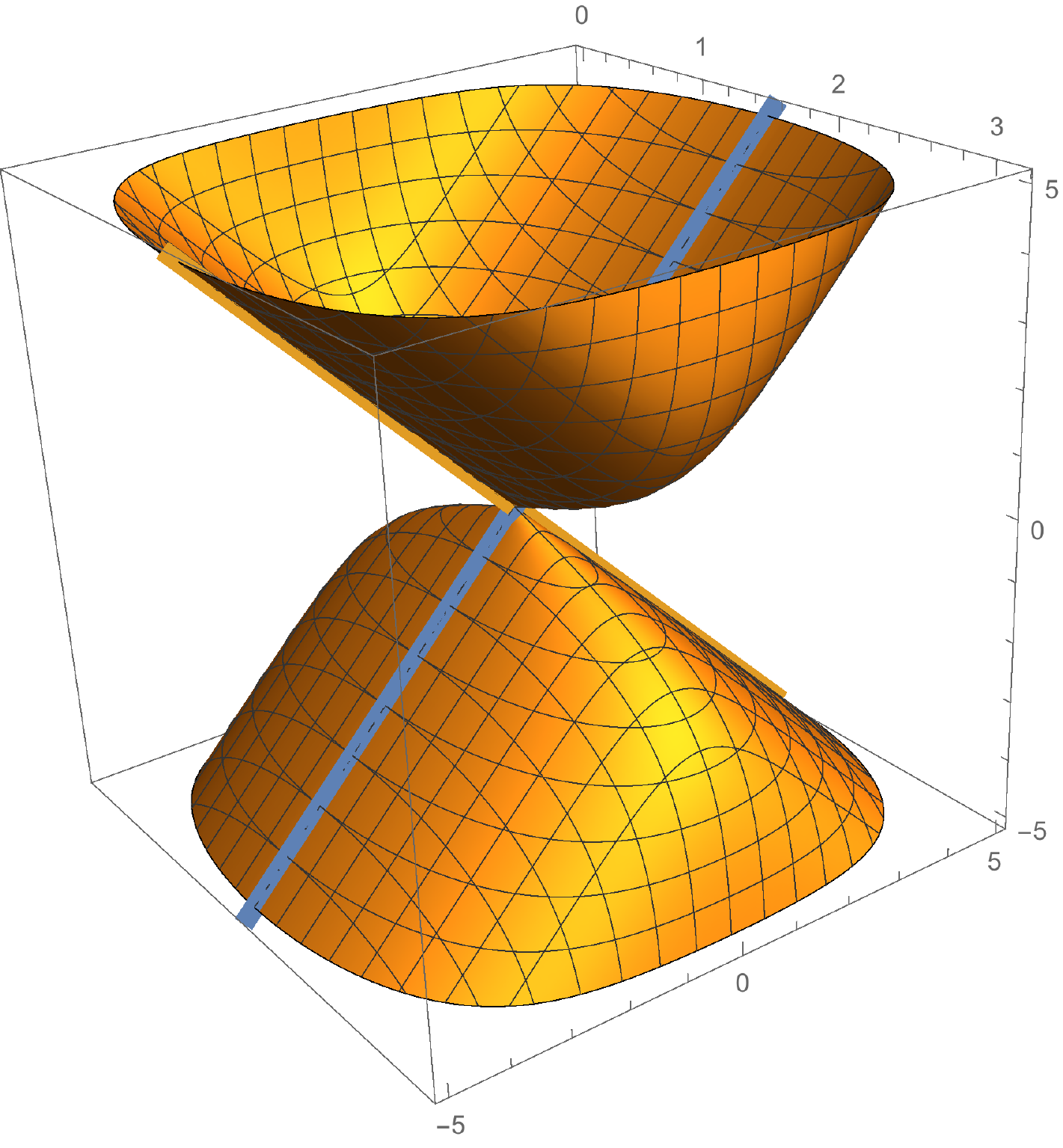}\, \includegraphics[width=.3\textwidth]{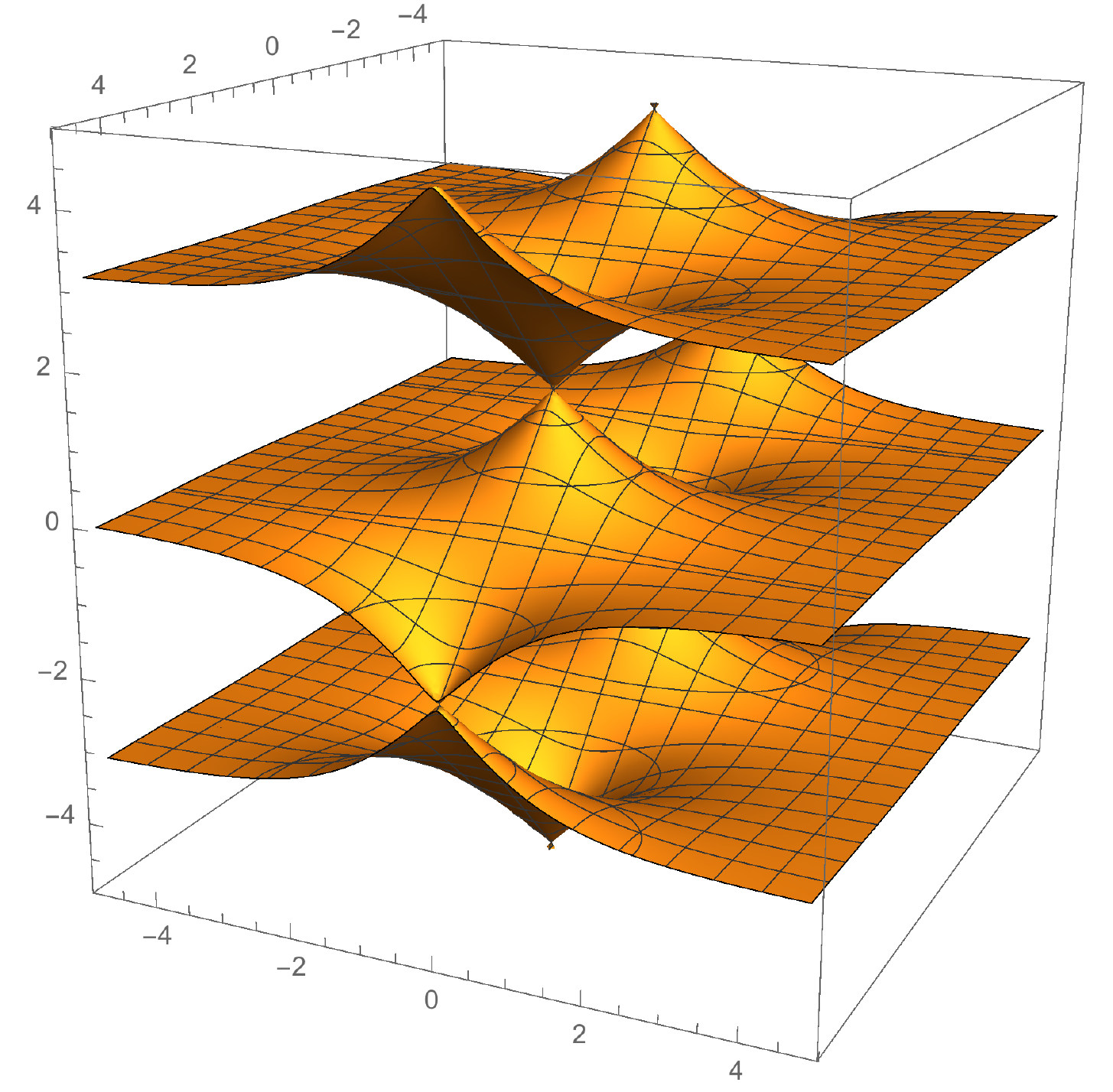}\, \includegraphics[width=.3\textwidth]{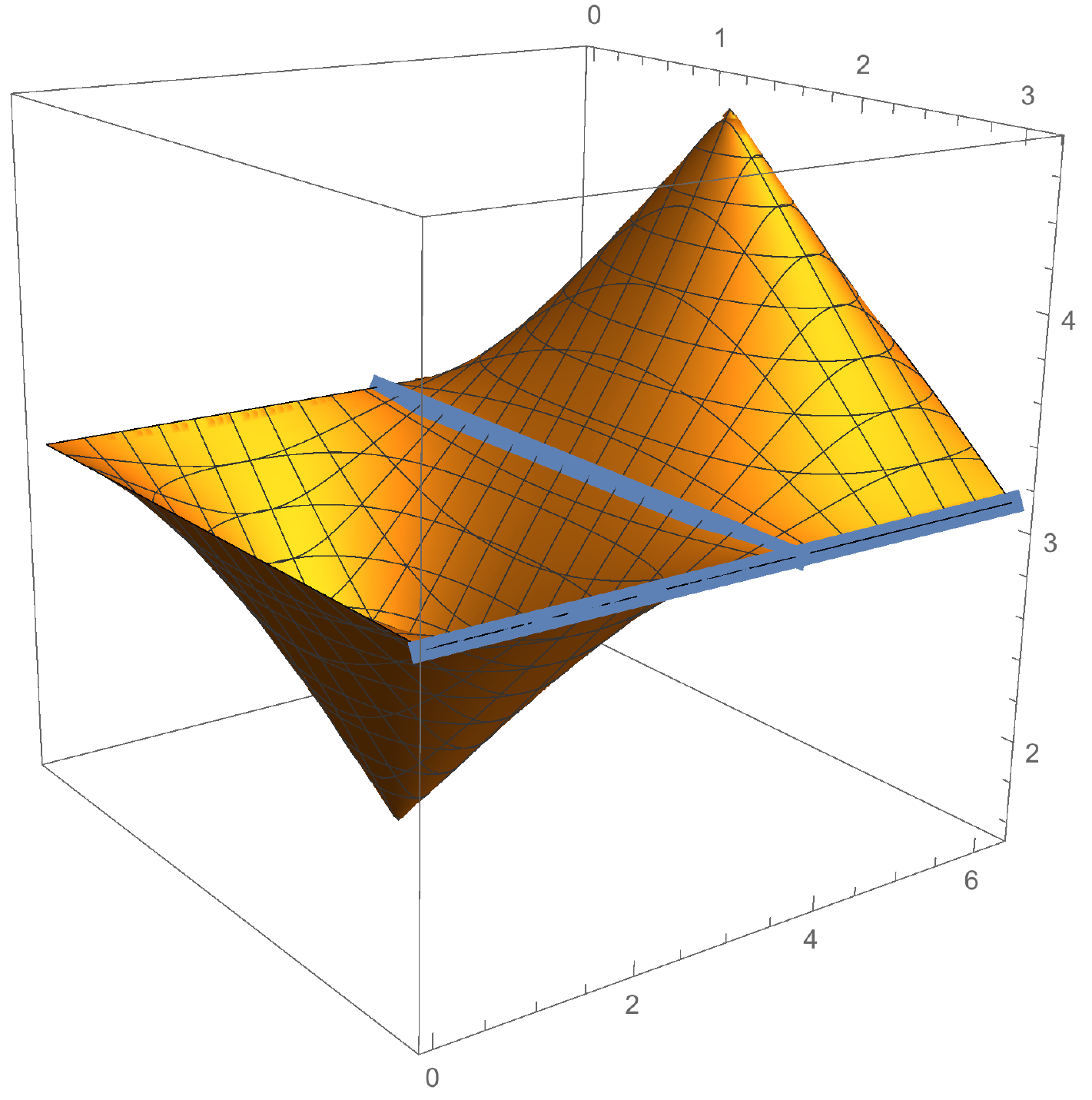}\end{center}
 \caption{Left: the surface  $\sin(x)\cosh(z)=\cosh(y)$. Middle: the surface  $\sin(x)=\cosh(y)\sin(z)$. Right. the surface  $\sin(x)M(y/\sqrt{2})= \sin(z)$ where we show the pair of straight-lines  through which the surface can be repeated by symmetries  }\label{fig6}
 \end{figure}

%%%%%
\subsection{Case that none of the three integrals can be solved by quadratures}

Consider  the constants in (\ref{var1}) given by 
$$\begin{array}{lll}
 a_1=-2m^2+1,& b_1=m^4,& c_1=1,\\
 a_2=-1,&  b_2=1,& c_2=m^2,\\
 a_3=1,&  b_3=1,&  c_3=m^2,
\end{array}$$
where $m\in\r$.   Then
$$X(u)=-2m^2+1+m^4 e^{2u}+e^{-2u},\quad Y(v)=-1+e^{2v}+m^2e^{-2v},\quad Z(w)=1+e^{2w}+m^2e^{-2w}.$$
The case $m=0$ appeared in the subsection \ref{s-421}.

Consider the interesting case   $m=1$. Now
$$X(u)=-1+e^{2u}+e^{-2u},\quad Y(v)=-1+e^{2v}+e^{-2v},\quad Z(w)=1+e^{2w}+e^{-2w}.$$
The solution of the first differential equation is  
$$\int\frac{df}{\sqrt{-1+e^{2f}+e^{-2f}}}=x.$$ Then
$$x=\int\frac{df}{\sqrt{-1+e^{2f}+e^{-2f}}}=\int^{e^f}\frac{d\tau}{\sqrt{1-\tau^2+\tau^4}}.$$
Denote $t=\mathcal{F}(\xi)$ the inverse of the elliptic integral $\int(1-\tau^2+\tau^4)^{-1/2}d\tau$. Similarly, let $\mathcal{G}(\xi)$   the inverse of the elliptic integral $ \int (1+\tau^2+\tau^4)^{-1/2}d\tau$.  Then the surface takes the form $\log\mathcal{F}(x)+\log\mathcal{F}(y)+\log\mathcal{G}(z)=0$, or equivalently
$$\mathcal{F}(x) \mathcal{F}(y) \mathcal{G}(z)=1.$$

%%%%%%%%%%%%%%%%
\section{Separable ZMC  surfaces: case $K<0$.}\label{sec5}
%%%%%%%%%%%%%%%%%%%%%%%%%%%%%%

In this section we obtain   examples of separable ZMC  surfaces when $K<0$ in Theorem \ref{t1}. Using Proposition   \ref{pr1} and without loss of generality, we  suppose   that $K=-1$ and $k=1$. We know that    the constants   $a_i$, $b_i$ and $c_i$   satisfy (\ref{var2}) and the functions $f$, $g$ and $h$ of \ref{s1}) are the solutions of the differential equations (\ref{sol3}).    In this section, we will only show some examples and pictures, see   Figure \ref{fig7}.   

\subsection{Example 1}
Consider the  constants
$$\begin{array}{lll}
 a_1=1, & b_1=0,& c_1=1\\
 a_2=1,& b_2=0,&c_2=1\\
 a_3=\frac12,& b_3=\frac12,& c_3=0.
   \end{array}$$
 Then 
 $$X(u)=1+\sin(u),\quad Y(v)=1+\sin(v),\quad Z(w)=\frac12+\frac12\cos(w).$$
The integrations of the three  equations yield
$$f(x)=2\arctan\left(\frac{1+\sinh(x/\sqrt{2})}{1-\sinh(x/\sqrt{2})}\right),$$ 
$$g(y)=2\arctan\left(\frac{1+\sinh(y/\sqrt{2})}{1-\sinh(y/\sqrt{2})}\right),$$
 $$h(z)= 2\arctan\left(\sinh(z/2)\right).$$
After some manipulations, the implicit equation of the surface  is 
$$
\frac{\sinh \left(x/\sqrt{2}\right) \sinh \left(y/\sqrt{2}\right)-1}{\sinh \left(x/\sqrt{2}\right)+\sinh \left(y/\sqrt{2}\right)}=-\sinh(z/2).$$

  \begin{figure}[hbtp]
\begin{center}\includegraphics[width=.3\textwidth]{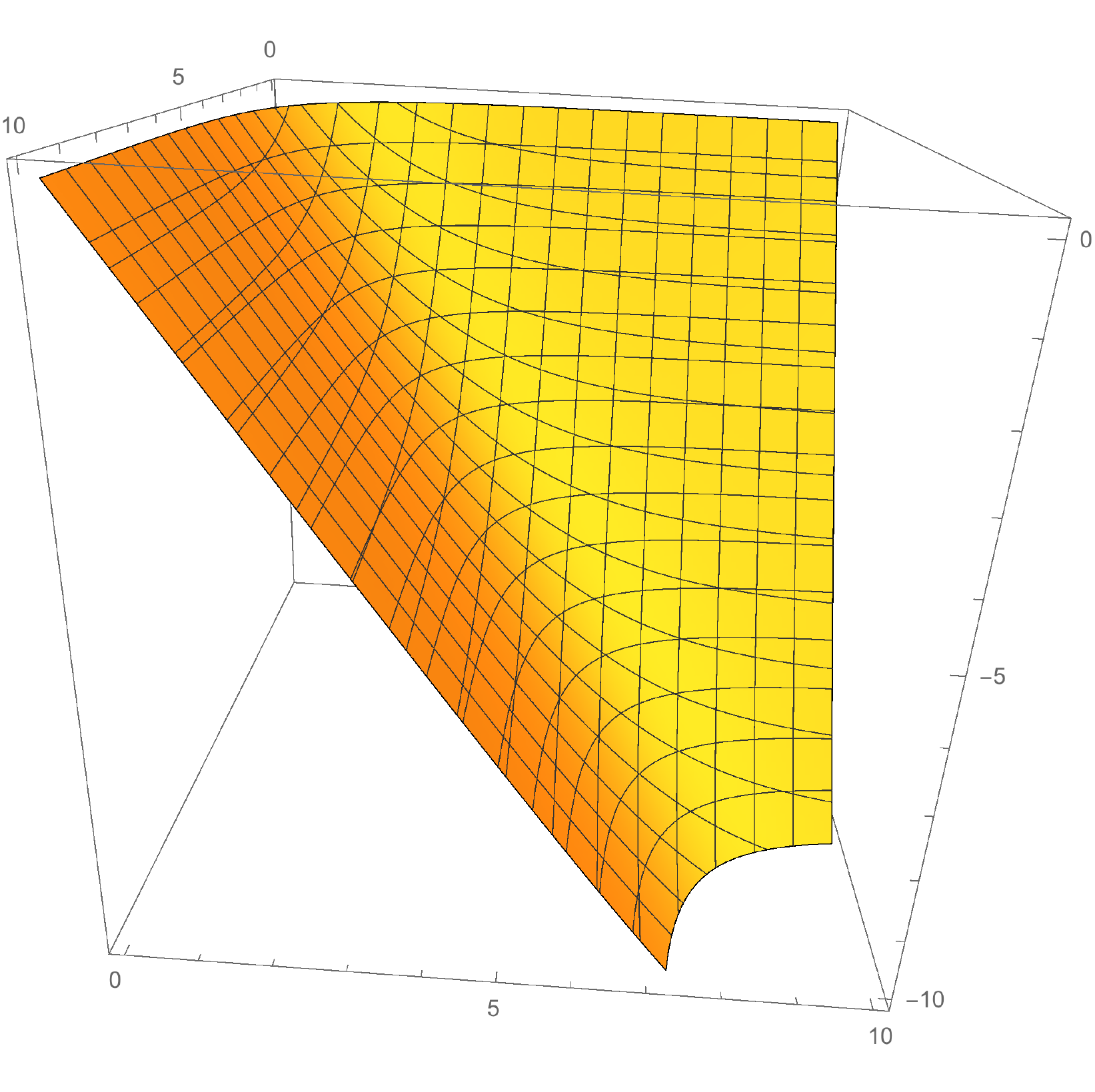}\quad \includegraphics[width=.3\textwidth]{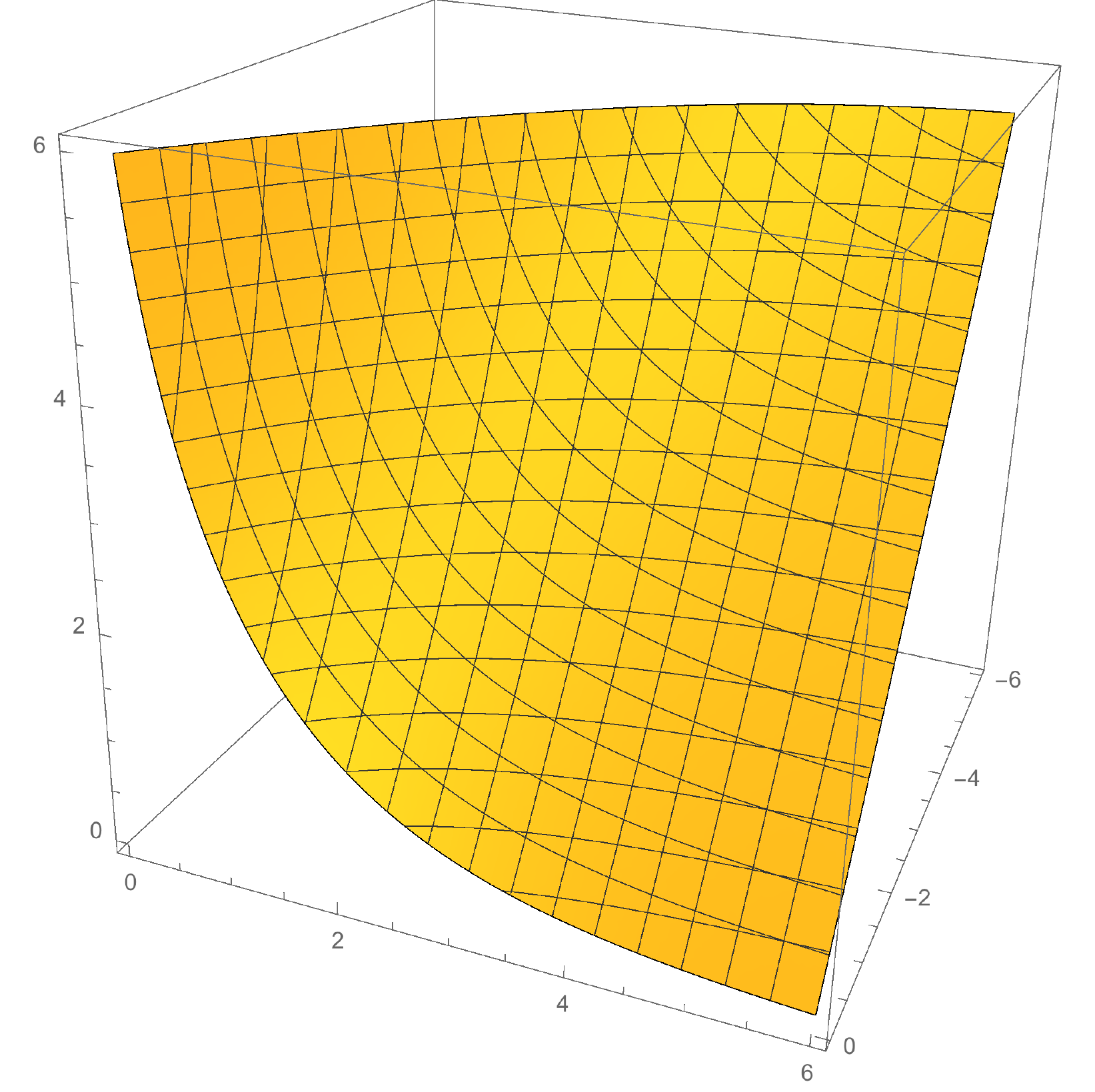}\end{center}
 \caption{The surfaces of Example 1 of Section \ref{sec5}}\label{fig7}
 \end{figure}
%%%
\subsection{Example 2}

 Consider the following constants: 
$$\begin{array}{lll}
 a_1=\frac12, \ b_1=\frac12,& c_1=0\\
 a_2=1, & b_2=0,& c_2=1\\
 a_3=\frac13, & b_3=0,& c_3=\frac13.
   \end{array}$$
  With a similar argument as the previous example, the integrations of the three differential equations by quadratures leads to
 $$f(x)=2\arctan\left(\sinh(x/2)\right),$$
 $$g(y)=2\arctan\left(\frac{1+\sinh(y/\sqrt{2})}{1-\sinh(y/\sqrt{2})}\right),$$
 $$h(z)= 2\arctan\left(\frac{1+\sinh(z/\sqrt{6})}{1-\sinh(z/\sqrt{6})}\right).$$
  Then the implicit equation of the surface is 
 $$
\frac{\sinh \left(y/\sqrt{2}\right) \sinh \left(z/\sqrt{6}\right)-1}{\sinh \left(y/\sqrt{2}\right)+\sinh \left(z/\sqrt{6}\right)}=-\sinh(x/2).$$

  %%%%%%%%%%%%%%%%%%%%%%%%

\end{document}